\DeclareMathOperator\Stab {Stab}
\renewcommand\ker {\mathop{\mathrm{Ker}}}
\DeclareMathOperator\Frob {Frob}
\DeclareMathOperator\Gal {Gal}
\DeclareMathOperator\Sp {Spec\,}
\DeclareMathOperator\End {End}
\newcommand{\tens}{\otimes}
\newcommand{\al}{\alpha}
\newcommand{\ds}{\displaystyle}
\newcommand{\e}{\varepsilon}
\newcommand{\inj}{\hookrightarrow}
\newcommand{\ka}{\kappa}
\newcommand{\ov}{\overline}
\newcommand{\s}{\sigma}
\newcommand{\wh}{\widehat}
\newcommand{\wt}{\widetilde}
\newcommand{\cart}{\ar@{}[rd]|-\square}
\newcommand{\N}{\mathbb{N}}
\newcommand{\Z}{\mathbb{Z}}
\newcommand{\Q}{\mathbb{Q}}
\newcommand{\C}{\mathbb{C}}
\newcommand{\sh}{{\mathrm{sh}}}
\newcommand{\nr}{{\mathrm{nr}}}
\newcommand{\ta}{{\mathrm{tame}}}
\newcommand{\tor}{{\mathrm{tors}}}
\newcommand{\pp}{{p'-\mathrm{tors}}}
\newcommand{\ptor}{{p-\mathrm{tors}}}
\newcommand{\mb}[1]{{\mathbb{#1}}}
\newcommand{\mc}[1]{{\mathcal{#1}}}
\newcommand{\mf}[1]{{\mathfrak{#1}}}
\newcommand{\ssi}{\Leftrightarrow}
\author{Cyrille Corpet\footnotemark[2]}
\title{Torsion subschemes and the Tate-Voloch conjecture over $p$-adic fields}
\begin{document}

\maketitle
\footnotetext[2]{Doctorant contractuel, Institut de Math\'ematiques, \'Equipe \'Emile Picard, Universit\'e Paul Sabatier, 118 Route de Narbonne, 31062 Toulouse Cedex 9, France

E-mail:~cyrille.corpet@math.univ-toulouse.fr}
\theoremstyle{plain}
\newtheorem{thm}{Theorem}[section]
\newtheorem{lem}[thm]{Lemma}
\newtheorem{prop}[thm]{Proposition}
\newtheorem{coro}[thm]{Corollary}
\newtheorem{conj}[thm]{Conjecture}
\newtheorem*{claim}{Claim}

\theoremstyle{definition}
\newtheorem{defi}{Definition}[section]
\newtheorem{exmp}{Example}[section]

\theoremstyle{remark}
\newtheorem*{rems}{Remarks}
\newtheorem{rem}{Remark}
\newtheorem*{note}{Note}
\newtheorem{case}{Case}
\begin{quotation}
\subsubsection*{Abstract}
{
\hspace \parindent We give a new proof of the Tate-Voloch conjecture, in the situation where the ambient variety
is a semiabelian variety defined over $\ov \Q_p$. Our proof is new in the sense that it
avoids any reference to algebraic model theory or $p$-adic Hodge theory.
}
\end{quotation}
\section*{Introduction}
\addcontentsline{toc}{section}{Introduction}

In this article, we use some of the techniques of the papers \cite{PinkRo,MM2ML} to prove a weak form of the Tate-Voloch conjecture.

\bigskip

We will freely use the following notations: $p$ is a prime number, fixed once and for all; $\C_p$ is the completion of $\ov {\Q_p}$, the algebraic closure of the field of $p$-adic numbers $\Q_p$. Its normalized $p$-adic norm is written as $\lvert\cdot\rvert_p$.

If $G$ is a group, then $G_\tor$ is the torsion subgroup of $G$ and $G_\ptor$ (resp. $G_\pp$) the  torsion subgroup whose elements have torsion of order a power of $p$ (resp. prime to $p$).

\bigskip

The Tate-Voloch conjecture is a statement about the ``$p$-adic distance'' between a closed subset and the torsion points in a semiabelian variety over $\C_p$. There is no canonical definition of such a distance, but for any of the natural definitions that will be described in the first section, the following statement makes sense (and its truth is actually independent of the distance used):
\begin{conj}[Tate-Voloch \cite{TaVo}]
Let $A$ be a semiabelian variety over $\C_p$ and $X$ a closed subscheme. Then there is an $\e>0$ such that for any torsion point $P\in A(\C_p)_\tor$, either $P\in X(\C_p)$ or $d(P,X)>\e$.
\end{conj}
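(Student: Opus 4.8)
The plan is, first, to replace $A$ and $X$ by their descent to a finite extension $K/\Q_p$, and then — after a further finite extension, using Grothendieck's semistable reduction theorem — to fix a semiabelian scheme $\mathcal A$ over $\mathcal O_K$ with generic fibre $A$; let $\mathcal X\subseteq\mathcal A$ be the Zariski closure of $X$ (a flat $\mathcal O_K$-scheme) and measure distances through this model. Since $\mathcal A[m]$ is finite flat over $\mathcal O_K$ for every $m$, each torsion point of $A$ extends uniquely to an $\mathcal O_{\C_p}$-point of $\mathcal A$, which gives a reduction homomorphism $r\colon A(\C_p)_\tor\to\mathcal A_s(k)$, $P\mapsto\ov P$. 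The soft first step is: if $\ov P\notin\mathcal X_s$ then no point of $X$ can lie in the residue disc of $\ov P$, so $d(P,X)\ge c_0$, where $c_0>0$ is the radius of a residue disc; thus one may restrict attention to torsion points reducing onto $\mathcal X_s$.

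The central input is a \emph{uniform separation} property of the torsion. The kernel of $r$ on torsion is $G:=\wh{\mathcal A}[p^\infty]$, the torsion of the formal group, which is all $p$-primary (a formal group over a $p$-adically complete ring has no prime-to-$p$ torsion); hence prime-to-$p$ torsion reduces injectively, and the torsion points above a fixed $\ov P$, when there are any, form a single coset of $G$. \textbf{Key Lemma.} There is $c_1>0$ such that every non-zero element of $G$ has formal norm $\ge c_1$. This is an elementary ramification estimate: a non-zero $p^n$-torsion point of $\wh{\mathcal A}$ is a zero of $[p^n](z)$, and the Newton polygon of $[p^n](z)/z$, read from $[p](z)=pz+\cdots+(\mathrm{unit})z^{p^h}+\cdots$, shows that its valuation is bounded above independently of $n$ (for a torus or ordinary reduction this is just the computation $v(\zeta_{p^n}-1)\le 1/(p-1)$). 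Combining with the first step, any two distinct torsion points of $A$ lie at distance $\ge c_2>0$ (when $\ov P=\ov Q$, apply the Key Lemma to $P-Q\in G\setminus\{0\}$ and compare the formal norm with the coordinate distance inside the common residue disc); so $A(\C_p)_\tor$ is $c_2$-separated, in particular closed, in $A(\C_p)$.

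To pass from uniform separation to the conjecture it is enough to rule out a sequence of torsion points $P_n\notin X$ with $d(P_n,X)\to0$. Covering $\mathcal A$ by finitely many affine opens — each of which, together with any point of its special fibre, contains the whole residue disc above it — one reduces to $\mathcal A$ affine and $X=V(f_1,\dots,f_r)$. Polynomials are Lipschitz on integral points, so $d(P_n,X)\to0$ forces $\abs{f_i(P_n)}\to0$ for each $i$, while conversely $d(P,X)\ge\max_i c_i^{-1}\abs{f_i(P)}$; hence it suffices to show, for a single regular function $f$ (after dividing out a power of a uniformiser, with $\ov f\ne0$), that $\abs{f(P)}$ is bounded away from $0$ over torsion $P$ with $f(P)\ne0$. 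The useful reformulation is multiplicative: $N_{K(P)/K}(f(P))$ is a non-zero element of $\mathcal O_K$ and $\abs{f(P)}=\abs{N_{K(P)/K}(f(P))}^{1/[K(P):K]}$, so everything comes down to bounding $[K(P):K]\cdot v(f(P))$ from above, uniformly over torsion $P$ with $f(P)\ne0$ — equivalently, after base change to the completed maximal unramified extension $F$ of $K$, to bounding the integer valuation $v_{F(P)}(f(P))$.

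This last uniform bound is where I expect the real work to lie. Over $F$ one may forget the prime-to-$p$ torsion, which is unramified, so that all ramification of $F(P)/F$ is carried by the $p$-primary part of $P$, whose formal coordinates are again governed by the Newton polygon of $[p^n](z)$; thus $e_{F(P)/F}$ grows with the $p$-power order of $P$ at a controlled rate while $v(f(P))$ must decay at least as fast, the match being forced because $P$ itself generates $F(P)$. Making this precise appears to need a Łojasiewicz-type estimate for $f$ on the residue discs that is uniform as the disc ranges over $V(\ov f)$ (the relevant $\ov f$-adic data vary in an algebraic, hence constructible, family, so take finitely many values), together with an induction on $\dim X$ to absorb the loci where $\mathcal X$ or $\mathcal A$ degenerates modulo $\mf m$. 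In the purely toric case all of this collapses — a fixed Laurent polynomial evaluated on roots of unity takes values bounded away from $0$ unless it vanishes, immediately from the uniform separation of $\mu_\infty$ — and the abelian and general semiabelian cases follow the same outline, the ordinary case being streamlined by the canonical lift of Frobenius in the style of \cite{PinkRo,MM2ML}.
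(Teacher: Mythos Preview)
The statement you are trying to prove is the \emph{full} Tate--Voloch conjecture over $\C_p$, which the paper explicitly declares open; the paper only establishes the special case where both $A$ and $X$ descend to a finite extension $K/\Q_p$ (Theorem~\ref{mainthm}). Your very first move --- ``replace $A$ and $X$ by their descent to a finite extension $K/\Q_p$'' --- is therefore not a reduction but the whole difficulty: a semiabelian variety over $\C_p$ need not be defined over $\ov{\Q_p}$, and even when $A$ is, Scanlon's theorem (which the paper reproves in weakened form) still requires $X$ to be defined over $\ov{\Q_p}$. So as a proof of the stated conjecture the proposal fails at line one.

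Granting the descent and comparing your outline with the paper's proof of the $K$-case, there is still a genuine gap. Your Key Lemma (uniform discreteness of $A(\C_p)_\tor$) is correct and is essentially Lemma~\ref{zero} (Mattuck), and your reduction to a single regular function $f$ is fine. But the substance of the theorem is precisely the ``uniform bound on $v_{F(P)}(f(P))$'' that you defer to a sketch. The heuristic that ``$e_{F(P)/F}$ grows with the $p$-power order of $P$ at a controlled rate while $v(f(P))$ must decay at least as fast'' is not an argument: nothing you have written forces $v(f(P))$ to be tied to the ramification of $P$, and the appeal to a uniform \L{}ojasiewicz inequality over a constructible family of residue discs is exactly the hard, unproved step. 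The norm identity $\lvert f(P)\rvert=\lvert N_{K(P)/K}(f(P))\rvert^{1/[K(P):K]}$ is true but gives nothing by itself, since $N_{K(P)/K}(f(P))\in\mc O_K\setminus\{0\}$ carries no upper bound on its valuation.

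For contrast, the paper's route is entirely Galois-theoretic and never attempts to estimate $\lvert f(P)\rvert$ directly. It splits torsion into three regimes --- unramified, tamely ramified prime-to-$p$, and wildly ramified $p$-primary --- and in each produces a polynomial relation $F(\s)(P)=0$ for suitable $\s\in\Gal(\ov K/K)$ (Weil's theorem on Frobenius for the first, the nilpotence relation $(\s-1)^3=0$ for the second, and Boxall's lemma for the third). These relations are fed into an algebro-geometric machine (Proposition~\ref{inter}, built from the companion-matrix construction of Proposition~\ref{sigmaconv} and the Pink--R\"ossler torsion-subscheme argument) which, via an ultraproduct, forces a limit point $P^*$ to satisfy $\s^N(P^*)=P^*$; combined with the original relation this pins $P^*$ down to a torsion point and contradicts Mattuck. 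The induction on $\dim X$ you mention is indeed present, but it is used to absorb the case where the sequence falls into a proper closed subset, not to handle degeneration of the model. Your outline shares only the use of Mattuck and the final induction scaffold; the mechanism that actually produces the bound is completely different, and in your version it is missing.
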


In \cite{TaVo} Tate and Voloch also proved their conjecture in the situation where the ambient variety is a torus. The general case of the conjecture is
still open. However, the conjecture is proven when the semiabelian variety $A$ is defined over $\ov {\Q_p}$. This is due to T. Scanlon, who first proved the result for points of order prime to $p$ (see \cite{Scan98}) and then for the entire torsion subgroup, using
a result of Sen in p-adic Hodge theory (see \cite{Scan99}). Both proofs use algebraic model theory along the same lines as Hrushovski in his proof of the Manin-Mumford conjecture (see \cite{Hru} for the latter). R\"ossler then gave some effectiveness to a restricted form of this result in \cite{RosTV}, using Arakelov geometry.

The analogous problem for Drinfeld modules was studied and solved by Ghioca (see \cite{GhioTaVo}), and more recently, in the context of the Andr\'e-Oort conjecture, Habegger proved an analogous result for CM points with ordinary reduction in a power of the modular curve $Y(1)$ (see \cite{Habegger}), thus suggesting that there might exist a "modular" Tate-Voloch conjecture for special points in mixed Shimura varieties. However, such a conjecture must be stated carefully since the naive formulation would imply a ``Mordell-Lang-Tate-Voloch'' conjecture, which is false (see Remark \ref{Habeg} for a counter-example).

\bigskip

In this paper, we will give a new proof of (a weaker form of) the result of Scanlon:

\begin{thm}
Let $K$ be a finite extension of $\Q_p$ and let $A$ be a semiabelian variety over
$K$. Let $X\inj A$ be a closed subscheme defined over $K$. Then there is an $\e>0$ such that for any torsion point $P\in A(\C_p)_\tor$, either $P\in X(\C_p)$ or $d(P,X)>\e$.
\label{mainthm}
\end{thm}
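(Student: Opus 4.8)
The plan is to stay on the $p$-adic side throughout, replacing Scanlon's use of Sen's theorem by a direct study of torsion inside the formal group. \emph{Reductions.} Enlarging $K$ to a finite extension only makes the statement stronger (an $\e$ that works over the bigger field works over $K$), so we may assume $A$ extends to a semiabelian scheme $\mc A$ over $\mc O_K$ with split toric part and abelian quotient of good reduction; let $\mc X\inj\mc A$ be the scheme-theoretic closure of $X$. Covering the special fibre by affine charts in which $\mc X=V(f_1,\dots,f_r)$ with $f_i\in\mc O_K[\,\cdot\,]$, one checks that any of the distances of Section~1 satisfies $d(P,X)\asymp\max_i\abs{f_i(P)}_p$ on the residue tube of the chart; so it suffices to bound $\max_i\abs{f_i(P)}_p$ away from $0$ for torsion $P\notin X$. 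This already yields a dichotomy: if the reduction $\ov P$ of a torsion point does not lie on $\ov{\mc X}$, then $\abs{f_i(P)}_p=1$ for some $i$ and $d(P,X)$ is bounded below by an absolute constant; so we are left with torsion points reducing into $\ov{\mc X}$.

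\emph{The key lemma: uniform discreteness of torsion.} I claim there is $\de>0$ with $d(P_1,P_2)\ge\de$ for all distinct $P_1,P_2\in A(\C_p)_\tor$. If $\ov{P_1}\ne\ov{P_2}$ this is clear from the integral model. If $\ov{P_1}=\ov{P_2}$, then $P_1-P_2$ lies in the kernel of reduction $\wh{\mc A}(\mf m_{\C_p})$, which is a pro-$p$ group; hence $P_1-P_2$ is a nonzero torsion point of $p$-power order in the formal group $\wh{\mc A}$. Such a point lies in a finite flat subgroup scheme killed by a power of $p$, and the classical ramification bound for the $p$-torsion of $p$-divisible groups (Fontaine, Raynaud) forces its valuation, normalised by $v_p(p)=1$, to be at most $\tfrac1{p-1}$, so $\abs{P_1-P_2}_p\ge p^{-1/(p-1)}$ and $d(P_1,P_2)\ge\de$ for a suitable $\de$. (For $A=\mb G_m$ this is the familiar estimate $\abs{\zeta-1}_p\ge p^{-1/(p-1)}$ for $\zeta$ a nontrivial root of unity.) Note also, by Krasner's lemma together with Mattuck's theorem that $A(L)_\tor$ is finite for each finite $L/\Q_p$, that $[K(P):K]\to\infty$ as the order of the torsion point $P$ grows.

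\emph{From the key lemma to the theorem.} When $X$ is finite the conclusion is immediate: each point $x\in X(\C_p)$ has at most one torsion point in the ball of radius $\de/2$ about it; an easy case analysis then bounds $d(P,x)$ below for every torsion $P\notin X$, and the minimum over the finitely many $x\in X(\C_p)$ is the required $\e$. For $\dim X\ge 1$ one argues by induction on $\dim X$ (the case $X=A$ being vacuous), combining uniform discreteness with a Frobenius/Galois-orbit argument in the spirit of \cite{PinkRo,MM2ML}: the set $\mc B$ of torsion points that are within a fixed small distance of $X$ but not on $X$ is stable under $\Gal(\ov{\Q_p}/K)$, since that group acts by isometries fixing $X$; if $\mc B$ were infinite it would contain points of arbitrarily large order, hence (by the remark above) with arbitrarily large Galois orbit, and one must show that so many $\de$-separated torsion points confined near $X$ force $X$ to contain a positive-dimensional torsion coset through them, contradicting $\mc B\cap X=\varnothing$ after passing to a quotient and invoking the inductive hypothesis. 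Tracking the constants through the finitely many inductive steps produces a single $\e>0$.

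\emph{Main obstacle.} The delicate point is the last step, the reduction in dimension for $\dim X\ge 1$: unlike in the number-field setting of \cite{PinkRo}, the residue field here is infinite and $A(\C_p)$ is not compact, so one cannot merely ``count points in a tube'', and the torsion near $X$ must be organised using the group law and the canonical Frobenius on the prime-to-$p$ part, which plays the role of rigidity that the reduction map alone cannot supply. The $p$-power torsion, where the Galois action is wildly ramified and where Scanlon needed Sen's theorem, must instead be controlled by the formal-group estimate of the key lemma together with the $\Z_p$-module structure of $\wh{\mc A}(\mf m_{\C_p})$; the crux is to make all the resulting bounds --- the constant $\de$, and the bookkeeping over the induction and over the finitely many residue tubes met by $\ov{\mc X}$ --- genuinely independent of the order of the torsion point.
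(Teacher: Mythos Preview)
Your ``key lemma''—that $A(\C_p)_\tor$ is uniformly $\de$-discrete with $\de\asymp p^{-1/(p-1)}$—is correct: the formal logarithm is an isomorphism on the region $\{v_p>1/(p-1)\}$, so nonzero torsion in the kernel of reduction must lie outside it. But this is exactly the quantitative form of Mattuck's theorem (Lemma~\ref{zero}), i.e.\ the case $\dim X=0$, and it is the \emph{only} part of your plan that is actually proved.

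The genuine gap is the inductive step for $\dim X\ge 1$. You assert that an infinite Galois-stable family of $\de$-separated torsion points accumulating on $X$ must ``force $X$ to contain a positive-dimensional torsion coset'', but nothing in the outline establishes this, and uniform discreteness alone cannot: over $\C_p$ the residue field is infinite and the $\e$-tube around a positive-dimensional $X$ accommodates arbitrarily many $\de$-separated points with no constraint on $X$ whatsoever—there is no volume or packing argument available. Your ``Main obstacle'' paragraph in effect concedes this. Note also that the formal-group estimate does \emph{not} replace Sen's input: it sees only torsion in the kernel of reduction, while the hard case of wildly ramified $p$-power torsion concerns points with nonzero reduction, which never enter $\wh{\mc A}(\mf m_{\C_p})$ at all.

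What the paper supplies, and your sketch is missing, is a mechanism that converts ``$P_n$ close to $X$'' into ``$P_n$ close to a strictly smaller subscheme''. This comes from explicit Galois equations rather than from metric discreteness: $[N]F(\tau)P=0$ with $F$ the Weil polynomial for unramified torsion (Proposition~\ref{galois}), $(\s-1)^3P=0$ for tame prime-to-$p$ torsion (Proposition~\ref{galeq}), and Boxall's combinatorial lemma producing some $\s$ with $(\s-1)P=u\in A[p]\setminus\{0\}$ for ramified $p$-power torsion. In each case Galois-invariance of $X$ and of $d$ yields $d(P_n,X)=d(P_n,X\cap X^{-u})$ (or an analogue inside $A^d$ via the companion-matrix construction of \S2.2--2.3), and since $\Stab X=0$ the right-hand side has strictly smaller dimension, closing the induction. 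These algebraic relations—coming from the Frobenius, the ramification filtration, and Boxall's trick—are doing all of the work beyond dimension zero; they are not consequences of your key lemma, and your proposal does not provide a substitute for them.
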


\begin{rem}
 This is weaker than Scanlon's result, since we assume that the subvariety $X$ is defined over $K$ and not merely over $\C_p$. The proof we give makes heavy use of this fact, but we are confident in the fact that it could be altered to include the more general case.
\end{rem}

Our proof is based on a study of the Galois action on torsion points and so-called torsion subschemes and follows the pattern of the algebro-geometric proof of the the Mordell-Lang conjecture given in \cite{MM2ML}. Instead of appealing to
Sen's result (see above), we use the structure of Galois groups of local fields, together with a combinatorial device known as Boxall's lemma.

In the first section, we define the distance functions that we will use.
In section two, we deal with the unramified torsion and in section three we deal with ramified prime-to-$p$ torsion. Finally in section four, we tackle the issue of ramified $p$-primary torsion and
pulling all our strings together we obtain a proof of Theorem \ref{mainthm}.

%The use of different methods in each section (torsion subschemes, ultrafilters and Boxall's theorem) allows us to give a (certainly not exhaustive) panel of the different tools one may use in order to prove such a statement in algebraic geometry.

\begin{rem}\label{Habeg}
 Looking at the relationship between the Manin-Mumford and Mordell-Lang conjectures, we might wonder if there is a ``Tate-Voloch conjecture for finitely generated subgroup''. However the following example of Scanlon's given to us by Habegger shows that it is not true that a sequence of points in a finitely generated subgroup cannot approach a subvariety: for $p$ an odd prime, the subgroup generated by $2\in\mb G_m(\Q_p)$ has $1$ as one of its limit points because for all $n>0$,
 \[\lvert 2^{(p-1)p^{n-1}}-1\rvert_p\leq p^{-n}.\]
 Hence, if an analogy could be made in a ``Mordell-Lang'' setting (or more generally in a ``Pink-Zilber'' setting), it would need a more refined statement.
\end{rem}
%---------------------------------------------------------------------------------------------------------------------------------------------------

\section{The {$p$}-adic distance on schemes}
Let $K$ be a complete subfield of $\mb C_p$, and $A$ be a scheme of finite type over $K$. Let $\mc O_K$ be the valuation ring of $K$.

\subsection{Defining {$p$}-adic metrics}
For any closed $K$-subscheme $X\subset A$, we may define a distance from a point $P\in A(\mb C_p)$ to $X$ as following : if $A=\mb A^n_K$, and $I\subset K[x_1,\dots,x_n]$ is the ideal defining $X$, we put
\[d_{\mb A^n}(P,X)=\max\left\{\lvert f(P)\rvert_p,f\in I\cap \mc O_K[x_1,\dots,x_n]\right\}\]
 where $\lvert\cdot\rvert_p$ denotes the usual $p$-adic norm on $K$; in general, if $A=\bigcup_{i\leq r}U_i$ is a decomposition of $A$ in open affine subschemes, we define
 \[d^{\mc U}_A(P,X)=\max \left\{d_{U_i}(P,X\cap U_i),P\in U_i\right\}.\]
We will omit the subscript $A$ if it is clear from context.

For a point $Q\in A(\C_p)$, we can consider it as a reduced closed subscheme of $A$, thus defining $d^{\mc U}(P,Q)$, which is a distance in the usual sense, and gives a metric to the topological space $A(\C_p)$.

This construction obviously depends on the decomposition $\mc U$; however, for two such decompositions $\mc U$ and $\mc V$, the distances are equivalent, in the sense that we have positive real numbers $0<\al<\beta$ such that
\begin{equation}\label{equi}\tag{E}
\al d^{\mc U}(P,X)\leq d^{\mc V}(P,X)\leq \beta d^{\mc U}(P,X).
\end{equation}
for all $P\in A(K)$. This implies that for a sequence $(P_n\in A(K))_{n\in\N}$,
the statements $\lim_{n\to\infty} d^{\mc U}(P_n,X)=0$ and $\lim_{n\to\infty} d^{\mc V}(P_n,X)=0$ are equivalent.

\bigskip

When $A$ has a model $\mc A$ over $\mc O_K$, the ring of integers of $K$, we have another description for this: let $S=\Sp \mc O_K$ and for $0<\e\leq 1$, $S_{(\e)}$ be the closed subscheme defined by the ideal $I_\e:=\{a\in \mc O_K, \lvert a\rvert_p<\e\}$. Assume we are given an $S$-scheme $\mc A$, and a closed subscheme $\mc X$. For any point $P\in \mc A(S)$ and any $0<\e\leq 1$, we have a pull-back $P_\e\in\mc A(S_{(\e)})$. We define
\[d_{\mc A}(P,\mc X)=\inf\{\e\in(0,1],P_\e\in \mc X(S_{(\e)})\}.\]

Since $K$ is complete, $d_{\mc A}(P,\mc X)=0$ precisely for points $P\in\mc X(S)$.

For simplicity, and when there is no ambiguity, we will omit the subscript $\e$ on points, so for $P\in\mc A(S)$, the notation $P\in \mc X(S_{(\e)})$ means ``$P$ reduces in $\mc A(S_{(\e)})$ to an element of $\mc X(S_{(\e)})$''.

For ease of computation, we will define $d_{\mc A}(P,\mc X)=1$ for $P\in A(K)\setminus \mc A(S)$. This is coherent with taking distances in a proper completion of $\mc A\to S$ (when such a completion exists), where $A(K)=\mc A(S)$ by the valuative criterion for properness.

 One can check that if $\mc U$ is an affine covering of $A$, coming from an $S$-affine covering of $\mc A$, then $d_{\mc A}(P,\mc X)=d^{\mc U}_A(P,X)$ where $X$ is the generic fibre of $\mc X$. This implies that distances coming from two different models are equivalent (in the sense of (\ref{equi})).

\bigskip

A good reference for all these notions of distance is \cite{VolDist}. We recall the following useful properties.

\begin{prop}\label{TV}
\begin{enumerate}[i)]
\item For subschemes $X$ and $Y$ closed in $A$, and any point $P\in A(\C_p)$, $d(P,X\cap Y)=\max(d(P,X),d(P,Y))$.
\item If $f:\mc A\to\mc B$ is a morphism of $S$-schemes then for any closed subscheme $\mc Y\inj\mc B$ and $P\in\mc A(S)$, and letting $\mc X=f^*\mc Y$, we have
\[d_{\mc A}(P,\mc X)=d_{\mc B}(f(P),\mc Y).\]
\item If $L$ is a Galois extension of $K$ with ring of integers $\mc O_L$ and $\mc X$ is a subscheme of $\mc A':=\mc A\tens_{\mc O_K}\mc O_L$, we have
\[\forall\s\in\Gal(L/K),\ d_{\mc A'}(\s(P),\mc X^\s) = d_{\mc A'}(P,\mc X) = d_{\mc A}(P,\mc X).\]

\end{enumerate}

\end{prop}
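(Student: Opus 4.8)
The plan is to run every assertion through the valuative description of the distance: for a model $\mc A$ over $S=\Sp\mc O_K$ and a section $P\in\mc A(S)$, one has $d_{\mc A}(P,\mc X)=\inf\{\e\in(0,1]\ :\ P_\e\in\mc X(S_{(\e)})\}$, and the basic fact to record first is monotonicity — if $P_\e\in\mc X(S_{(\e)})$ and $\e'\le\e$, then since $I_{\e'}\subseteq I_\e$ exhibits $S_{(\e')}$ as a closed subscheme of $S_{(\e)}$, restricting the section gives $P_{\e'}\in\mc X(S_{(\e')})$. Thus $\{\e\ :\ P_\e\in\mc X(S_{(\e)})\}$ is an interval with right endpoint $1$, and the distance is its left endpoint. (When $A$ carries no integral model one uses instead the affine-cover definition; since that definition agrees with the model one when a model exists, and both are local on $A$, each of the three statements may be checked on a single affine chart $\mb A^n$, where it reduces to bookkeeping with the ultrametric inequality applied to ideals of $\mc O_K[x_1,\dots,x_n]$.)

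For (i), the scheme-theoretic intersection $\mc X\cap\mc Y$ is the fibre product $\mc X\times_{\mc A}\mc Y$, so a section $T\to\mc A$ factors through it precisely when it factors through both $\mc X$ and $\mc Y$. Applying this with $T=S_{(\e)}$ and section $P_\e$, the set of $\e$ with $P_\e\in(\mc X\cap\mc Y)(S_{(\e)})$ is the intersection of the two sets attached to $\mc X$ and to $\mc Y$; taking left endpoints of these intervals yields $d(P,\mc X\cap\mc Y)=\max(d(P,\mc X),d(P,\mc Y))$. On $\mb A^n$ this is just the identity $I(X\cap Y)=I(X)+I(Y)$ together with the observation that $d(P,X)$ is the maximum of $\abs{g_i(P)}_p$ over a system of integral generators $g_i$ of $I(X)$, plus the ultrametric inequality $\abs{g(P)+h(P)}_p\le\max(\abs{g(P)}_p,\abs{h(P)}_p)$.

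For (ii), $\mc X=f^*\mc Y$ is by definition the fibre product $\mc A\times_{\mc B}\mc Y$, whose universal property says a section $g\colon T\to\mc A$ factors through $\mc X$ if and only if $f\circ g$ factors through $\mc Y$. Taking $T=S_{(\e)}$, $g=P_\e$, and using $f\circ P_\e=(f(P))_\e$, the sets $\{\e\ :\ P_\e\in\mc X(S_{(\e)})\}$ and $\{\e\ :\ (f(P))_\e\in\mc Y(S_{(\e)})\}$ coincide, hence so do their infima. For (iii), an element $\s\in\Gal(L/K)$ acts on $\mc O_L$ by a ring automorphism fixing $\mc O_K$ and preserving $\abs{\cdot}_p$, so it induces an automorphism of $S'=\Sp\mc O_L$ over $S$ carrying each $S'_{(\e)}$ onto itself; transport of structure along $\s$ then identifies, for every $\e$, the condition $P_\e\in\mc X(S'_{(\e)})$ with $\s(P)_\e\in\mc X^\s(S'_{(\e)})$, giving the first equality. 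The second equality (in which $\mc X$ and $P$ are understood to be base-changed from $\mc O_K$, i.e.\ one compares the $\mc O_L$- and $\mc O_K$-models of the same data) holds because $\mc O_K/I_\e$ injects into $\mc O_L/I_\e\mc O_L$, so the ideal of $\mc X$ kills $P$ modulo $I_\e$ over $\mc O_K$ iff it does over $\mc O_L$; the intervals, hence the infima, agree.

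The one genuinely delicate point — the step flagged as \up{one can check} in the text — is reconciling the three formulations of the distance (the maximum of $\abs{f(P)}_p$ over $f\in I\cap\mc O_K[x_1,\dots,x_n]$, the infimum over the thickenings $S_{(\e)}$, and model-independence up to the constants in \eqref{equi}). Concretely this is a content/Gauss-lemma computation showing that $I\cap\mc O_K[x_1,\dots,x_n]$ is generated over $\mc O_K[x_1,\dots,x_n]$ by any primitive generating set of $I$, from which $d(P,X)=\max_i\abs{g_i(P)}_p$ for integral $P$ follows and all three statements above become formal consequences of the universal properties of fibre products. Everything else is routine.
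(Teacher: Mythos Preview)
Your argument is correct. The paper itself does not prove this proposition at all: its entire proof reads ``Exercise to the reader. See properties (b), (c) and (e) from \cite[Theorem 1]{VolDist}.'' So there is no approach to compare against; you have supplied the exercise the author left out.

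Your organizing idea --- reduce everything to the valuative/thickening description $d_{\mc A}(P,\mc X)=\inf\{\e:P_\e\in\mc X(S_{(\e)})\}$ and then invoke the universal property of fibre products for each of (i), (ii), (iii) --- is exactly the clean way to do this, and matches how Voloch's properties (b), (c), (e) are established. Two small remarks. First, in (iii) you are right that the second equality $d_{\mc A'}(P,\mc X)=d_{\mc A}(P,\mc X)$ only makes literal sense when the data descend to $\mc O_K$; the paper is tacitly using it only in that situation (e.g.\ in Proposition~\ref{sigmaconv}), so your parenthetical reading is the intended one. Second, your justification via ``$\mc O_K/I_\e\hookrightarrow\mc O_L/I_\e\mc O_L$'' is slightly oblique when $L/K$ is ramified (then $I_\e\mc O_L$ can be strictly smaller than the $\mc O_L$-ideal $\{b:\lvert b\rvert_p<\e\}$); the cleaner phrasing is simply that $\lvert f(P)\rvert_p$ is computed by the unique extension of the absolute value and is therefore the same over $K$ or $L$, so the set of admissible $\e$ is unchanged. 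This is a cosmetic point and does not affect the validity of your proof.
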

\begin{proof}
 Exercise to the reader. See properties (b), (c) and (e) from \cite[Theorem 1]{VolDist}.
\end{proof}

From now on, we will assume that $K$ is a finite (hence complete) extension of $\Q_p$ so $\mc O_K$'s residue field $k$ is finite, say of cardinality $q$, and we can define
\[\e_K:=p^{-1/e}=\max\{\lvert x\rvert_p,\ x\in\mf m\}<1,\]
where $e$ is the ramification index of $K/\Q_p$ and $\mf m$ is the maximal ideal in $\mc O_K$. So for any data $\mc X\inj \mc A$ above $S$, $P\in \mc A(S)$, we have
\[d_{\mc A}(P,\mc X)\leq \e_K^{n}\ssi P\in\mc X(S_n).\]
Where we have used the notation $S_n=S_{(\e_K^n)}=\Sp \mc O_K/\mf m^{n+1}$.
Writing $S^\sh=\Sp\mc O^\sh_K$, the strict henselization of $\mc O_K$, we have the analog description for $P\in \mc A(S^\sh)$
\[d_{\mc A}(P,\mc X)\leq \e_K^{n}\ssi P\in\mc X(S^\sh_n),\]
 since $\mc O^\sh_K$ is the ring of integers of the maximal unramified extension $K^\nr$ of $K$, which has the same valuation group as $K$.

\subsection{Distance of torsion points in a semiabelian variety}

Now, keeping the notation from the previous subsection, let us assume that $A$ is a semiabelian variety over $K$. In this setting we will use the following notations, which we will keep until the end of this article: for an integer $n$, a closed subscheme $X$ and a point $P\in A(\ov K)$, $[n]$ is the multiplication-by-$n$ morphism, $n\cdot X$ is the direct image of $X$ by $[n]$ (which is closed because $[n]$ is proper), and $X^{+P}$ is the translated subscheme. Following \cite[VIII, \S6]{SGA3b}, we also define the stabilizer $\Stab(X)$ of $X$ as the closed subgroup variety whose geometric points $P$ satisfy $X^{+P}=X$.

% We will use the same notation in any model of $A$ over the integral scheme $S$, whenever they make sense. For instance, if $\mc X\subset \mc A$ is a closed $S$-subscheme, and $P\in \mc A(S)$, $\mc X^{+P}$ is the translated $S$-subscheme.

The theorem that we intend to prove is the following:

\begin{thm}
For any $X\inj A$ and affine covering $\mc U$ of $A$, there is an $\e>0$ such that for any $P\in A(\C_p)_\tor$ either $P\in X(\C_p)$ or $d^{\mc U}_A(P,X)>\e$.
\end{thm}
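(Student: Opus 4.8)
The strategy is to decompose the torsion subgroup $A(\C_p)_\tor$ into three pieces and prove a uniform lower bound on the distance for each: the unramified torsion (torsion points defined over the maximal unramified extension $K^\nr$), the ramified prime-to-$p$ torsion, and the ramified $p$-primary torsion. Since any torsion point generates a finite extension $L/K$, its "ramified part" and "unramified part" can be separated by factoring $L/K$ through its maximal unramified subextension; and any finite-order point splits as a sum of a $p$-primary part and a prime-to-$p$ part, so $A(\C_p)_\tor = A(\C_p)_\ptor + A(\C_p)_\pp$. Using Proposition~\ref{TV}(iii), the distance is Galois-invariant, so it suffices to control the distance for one representative in each Galois orbit, and by Proposition~\ref{TV}(ii) we may work on a fixed integral model $\mc A/\mc O_K$ of $A$ with a closed model $\mc X$ of $X$. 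The three cases occupy sections 2, 3, and 4 respectively, as announced in the introduction.

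**The key steps.** First I would fix a good (e.g.\ semiabelian, or at least flat) model $\mc A/\mc O_K$ of $A$ together with a closed subscheme $\mc X$ with generic fibre $X$, so that $d(P,X) = d_{\mc A}(P,\mc X)$ and distances $\le \e_K^n$ are detected by reduction modulo $\mf m^{n+1}$, using the dictionary established at the end of section~1. The \emph{unramified case} should follow from a Frobenius/reduction argument: if $P$ is torsion and defined over $K^\nr$, then reduction mod $p$ is injective on the prime-to-$p$ torsion and the $p$-primary part is controlled by the action of Frobenius, so a point approaching $\mc X$ mod $\mf m^{n+1}$ for large $n$ forces $P$ to actually lie on $\mc X$; the point is that there is no "continuous" family of Galois conjugates to exploit, so a single bound works. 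For the \emph{ramified prime-to-$p$ case}, the inertia group acts through its tame quotient, which is (pro-)cyclic of order prime to $p$ on these points; here one runs the argument in the spirit of \cite{MM2ML}, replacing a generator of the tame inertia for a large Frobenius power, and invokes Boxall's lemma (the combinatorial device referred to in the introduction) to conclude that a point too close to $\mc X$ must be genuinely on $\mc X$ — this turns the approximation statement into an exact algebraic one via the stabilizer $\Stab(X)$ and the operations $n\cdot X$, $X^{+P}$. The \emph{ramified $p$-primary case} is handled last, again using the structure of the wild inertia subgroup of $\Gal(\ov{\Q_p}/K)$ (a pro-$p$ group) together with Boxall's lemma, which is exactly the place where Scanlon used Sen's theorem from $p$-adic Hodge theory and where the present proof substitutes a purely group-theoretic input. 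Finally, one combines the three uniform $\e$'s: given an arbitrary torsion $P$, write $P = P_{\pp} + P_{\ptor}$ and use the triangle-type inequality together with the translation-compatibility of the distance (Proposition~\ref{TV}) to reduce to the three cases, taking $\e$ to be the minimum of the three bounds (possibly after shrinking to account for the finitely many "bad" primes dividing the order of $\Stab(X)$ or the conductor of $\mc A$).

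**Main obstacle.** The hard part will be the ramified $p$-primary torsion (section~4): the $p$-adic distance does not behave well under $[p]$ the way it does under prime-to-$p$ multiplication (multiplication by $p$ is not étale on the special fibre), so the naive substitution "replace tame inertia by Frobenius" breaks down, and one cannot simply reduce mod $p$. The whole point of Boxall's lemma is to extract a workable statement here: morally, it says that if infinitely many $p$-power torsion points lie in a sequence of ever-smaller neighbourhoods of $\mc X$, then one can find relations forcing a positive-dimensional torsion coset inside $\mc X$, contradicting either properness/finiteness or a dimension count on $\Stab(X)$. Making the interaction between the wild inertia action, the filtration $S_n$, and the multiplication-by-$p$ maps precise — and quantitative enough to yield a single $\e$ independent of the order of $P$ — is where essentially all the work lies; the unramified and tame cases, by contrast, are close to the classical reduction-theory arguments and should be comparatively routine.
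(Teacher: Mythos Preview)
Your three-way decomposition matches the paper's outline, but the actual mechanisms you propose for each piece are either vague or misplaced, and several key ideas are missing.

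\textbf{Induction on $\dim X$.} The entire proof runs by induction on the dimension of $X$ (after reducing to $X$ irreducible with trivial stabilizer), and each of the three cases uses the induction hypothesis in an essential way via Lemma~\ref{multiple}: if a subsequence falls into a proper closed subset of $X$, we are done by induction. You never mention this structure, but without it none of the three arguments terminates.

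\textbf{Unramified and tame cases: you are missing the main engine.} Your description of the unramified case as ``a Frobenius/reduction argument'' and of the tame case as ``Boxall's lemma'' misses the central construction. What actually happens in both cases is: one produces a Galois equation $F(\s)(P)=0$ for a monic $F\in\Z[T]$ (coming from Weil's theorem for unramified torsion, and from $(\s-1)^3=0$ for prime-to-$p$ ramified torsion), embeds the sequence $(P_n,\s P_n,\dots,\s^{d-1}P_n)$ into $A^d$, and shows it converges to a torsion subscheme $Z\subset X^d$ built from the companion matrix of $F$ (Proposition~\ref{sigmaconv}). One then uses a general-type/birational-automorphism finiteness argument \`a la Pink--R\"ossler to conclude $(\s^N-1)(P^*)=0$ in an ultraproduct field (Proposition~\ref{inter}), which combined with the original Galois equation forces $P^*$ to be torsion (unramified case) or unramified (tame case). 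None of this is a ``reduction-theory argument'' in the classical sense, and Boxall's lemma plays no role whatsoever in the prime-to-$p$ case.

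\textbf{Boxall's lemma.} You invoke it for the tame case and again for the $p$-primary case, and you describe it as producing ``a positive-dimensional torsion coset inside $\mc X$''. In fact it is used \emph{only} for the ramified $p$-primary part, and it is a purely Galois-theoretic statement with no reference to $X$: for $Q\in A(\ov L)_{\ptor}\setminus A(L)$ it supplies $\s$ with $(\s-1)(Q)\in A[p]\setminus\{0\}$. The way this is exploited is simple and direct: one gets $\s^{k'_n}_n(P_n)=P_n+u$ for a fixed nonzero $u\in A[p]$, hence $d(P_n,X)=d(P_n,X^{-u})$ by Galois-invariance of $X$, so $P_n$ converges to $X\cap X^{-u}$, a proper closed subset, and induction finishes. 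There is no dimension count on $\Stab(X)$ and no ``filtration $S_n$'' analysis.

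\textbf{The final combination.} There is no triangle inequality step. The case split is not $P=P_\pp+P_\ptor$ treated separately and summed; rather one asks whether the $p$-primary part of $P$ is unramified or not. If it is, Theorem~\ref{pp} (which already handles $A(K^\nr)_\tor+A(\ov K)_\pp$) applies; if not, Theorem~\ref{ptor} applies directly. Your proposed recombination would not work because the distance to $X$ is not subadditive under the group law in any useful way.
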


The structure of our proof is as follow : first we prove the case where $X$ is reduced to a point, which is known as Mattuck's theorem. Then we prove that we can reduce to the case where $X$ is an irreducible subscheme of $A$ with trivial stabilizer. Then we separate the cases of unramified torsion points, torsion points with only prime-to-$p$ ramification, and torsion points with $p$-primary ramification to prove the general case by induction on the dimension of $X$.

\bigskip

Notice that, as we said before, the choice of an affine covering only changes the value of $\e$; we may therefore work with the covering that suits us best, and in particuliar use $d_{\mc A}$ for any model $\mc A$ of $A$. In that case, we will write $A_0$ for the special fibre of $\mc A$.

\begin{rem}
From the Proposition \ref{TV}.ii), it is clear that for $P,Q\in \mc A(S)$,
\[d_{\mc A}(P-Q,\mc X) = d_{\mc A}(P,\mc X^{+Q}).\]
\end{rem}

% We first start with some very simple examples, which will be useful later on. Those examples are based on the following:
The main ingredient in the proof of Mattuck's theorem is the following:

\begin{prop}\label{formal}
Let $\mc A\to S$ be a semiabelian scheme.
 For any algebraic extension $K'/K$, with ring of integers $\mc O_{K'}$ and residue field $\ka$,

a) The kernel of the reduction map $\pi:\mc A(\mc O_{K'})_\tor\to A_0(\ka)$ is discrete for the $p$-adic topology;

b) If $K'/K$ is unramified, $\ker \pi$ is a finite group.
\end{prop}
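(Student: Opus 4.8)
The plan is to exploit the structure of a semiabelian scheme $\mc A \to S$ via its connected-component/formal-group decomposition. Recall that $\ker\pi$ consists exactly of those torsion points of $\mc A(\mc O_{K'})$ that reduce to the identity in the special fibre $A_0(\ka)$; these are precisely the $\mc O_{K'}$-points of the formal group $\wh{\mc A}$ associated to $\mc A$ along the unit section. So the statement reduces to understanding torsion in $\wh{\mc A}(\mf m_{K'})$, where $\mf m_{K'}$ is the maximal ideal of $\mc O_{K'}$. The formal group $\wh{\mc A}$ is a smooth formal group of dimension $d=\dim A$ over $\mc O_{K}$ (it does not see the toric versus abelian distinction, both contributing smooth formal groups), so after a choice of coordinates its group of $\mf m_{K'}$-points has a filtration by the subgroups $\wh{\mc A}(\mf m_{K'}^i)$.

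For part a), the key computation is the standard one for formal groups over mixed-characteristic local rings: for $i$ large enough (explicitly, once $i > e'/(p-1)$ where $e'$ is the absolute ramification index of $K'$, which one should phrase valuation-theoretically so it behaves well for infinite algebraic extensions), the logarithm gives an isomorphism $\wh{\mc A}(\mf m_{K'}^i) \xrightarrow{\sim} (\mf m_{K'}^i)^d$ of topological groups, and the right-hand side is torsion-free. Hence $\wh{\mc A}(\mf m_{K'}^i)$ is torsion-free for such $i$, which forces every torsion point of $\ker\pi$ to lie outside $\wh{\mc A}(\mf m_{K'}^i)$; equivalently, every nontrivial torsion point $P \in \ker\pi$ satisfies $\lvert P \rvert_p \geq \e_{K'}^{\,i}$ for a bound on $i$ that one can take uniform in $K'$ if one measures with the fixed normalized norm $\lvert\cdot\rvert_p$ on $\C_p$ — the point being that the condition $i > e'/(p-1)$ translates into a fixed lower bound on the $p$-adic absolute value, independent of $K'$. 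That uniformity is exactly what "discrete for the $p$-adic topology" asks for. The mild subtlety to handle carefully is that $\mf m_{K'}$ need not be principal when $K'/K$ is infinite and ramified, so one works with the valuation directly rather than with powers of a uniformizer; the logarithm/exponential estimates are insensitive to this.

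For part b), when $K'/K$ is unramified, $\mc O_{K'}$ has the same value group as $\mc O_K$, so $\mf m_{K'}$ is still generated by a uniformizer $\pi_K$ of $K$, and $e'=e$ is finite and fixed. Then $\ker\pi = \wh{\mc A}(\mf m_{K'})$ is a pro-$p$ group (its filtration quotients $\wh{\mc A}(\mf m^i)/\wh{\mc A}(\mf m^{i+1}) \cong \ka^d$ are elementary abelian $p$-groups, killed by $p$), so $(\ker\pi)_\tor = (\ker\pi)_{\ptor}$. Combining with part a): the torsion lives in the finite-length quotient $\wh{\mc A}(\mf m_{K'}) / \wh{\mc A}(\mf m_{K'}^i)$ for the fixed $i$ from part a), which is a finite group (being an iterated extension of copies of $\ka^d$, and $\ka$ is finite), so $(\ker\pi)_\tor$ injects into a finite group and is therefore finite. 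I expect the main obstacle to be purely bookkeeping: getting the logarithm-radius bound in part a) stated in a genuinely $K'$-uniform way (so that it yields discreteness with a single $\e$), and making sure the formal group and its filtration are set up correctly for a general, possibly non-Noetherian, algebraic extension $\mc O_{K'}$ of $\mc O_K$ — neither is deep, but both need care.
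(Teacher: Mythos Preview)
Your approach via the formal group and the logarithm is the same as the paper's, and your treatment of part a) is correct (and in fact more carefully uniform in $K'$ than the paper's sketch). Part b), however, has a genuine gap: you assert that $\wh{\mc A}(\mf m_{K'})/\wh{\mc A}(\mf m_{K'}^i)$ is a finite group because it is an iterated extension of copies of $\ka^d$ ``and $\ka$ is finite''. But $\ka$ is the residue field of $K'$, and the proposition allows $K'/K$ to be \emph{any} unramified algebraic extension; for $K'=K^\nr$---precisely the case invoked later in Proposition~\ref{galois}---one has $\ka=\ov k$, which is infinite. So your filtration quotients are infinite, $\wh{\mc A}(\mf m_{K'})$ is neither pro-$p$ nor profinite, and the finiteness conclusion does not follow from what you wrote.

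The repair is short and stays within your framework. Your filtration still shows that each layer $\wh{\mc A}(\mf m_{K'}^j)/\wh{\mc A}(\mf m_{K'}^{j+1})$ is killed by $p$, hence $\wh{\mc A}(\mf m_{K'})/\wh{\mc A}(\mf m_{K'}^{i})$ is killed by $p^{i-1}$; combined with part a) this forces every element of $\ker\pi$ to lie in $\wh{\mc A}[p^{i-1}](\mc O_{K'})$. Now use that $\mc A\to S$ is a \emph{semiabelian} scheme: $[p]$ is an isogeny on $\mc A$, hence finite flat on the formal completion, so $\wh{\mc A}[p^{i-1}]$ is a finite flat group scheme over $\mc O_K$ and therefore has only finitely many $\mc O_{K'}$-points regardless of the size of $\ka$. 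The paper's own proof of b) instead cites \cite[III, \S 7]{BourbLie23} for the open subgroup having finite index; taken literally that also requires $\ka$ finite, so the same finite-height input is implicitly needed there as well.
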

\begin{proof}
 a) Let $L$ be the completion of $K'$ with ring of integers $\mc O_L$. The kernel of the full reduction map $\mc A(\mc O_L)\to A_0(\ka)$ is isomorphic to the group $\wh A(\mf m_L)$ of points of the formal group $\wh A$ of $A$ with coordinates in the maximal ideal $\mf m_L$ of $\mc O_L$. However, the formal logarithm gives an isomorphism from an open subgroup $G\subset\wh A(\mf m_L)$ to $\wh{\mb G_a}^d(\mf m_L)$ where $\wh{\mb G_a}$ is the additive formal group. Since $\wh{\mb G_a}$ has no torsion, $\ker \pi$ must be discrete (a closer analysis would show that it is a $p$-group, but we will not need it here).

b) If $K'/K$ is unramified, the open subgroup $G\subset\mf A(\mf m_L)$ must be of finite index (see \cite[III, \S 7]{BourbLie23}), so $\ker \pi$ is finite.
\end{proof}

\begin{lem}[Mattuck \cite{Mattuck}]\label{zero}
The conjecture is true when $X$ is the $0$ point of $A$ seen as a reduced closed subscheme. Equivalently, the topological group $A(\ov K)_\tor$ is discrete.
\end{lem}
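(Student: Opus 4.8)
The plan is to read the statement off Proposition~\ref{formal}(a) once we have put ourselves in a position to apply it, the only preparatory work being two harmless reductions and an unwinding of the definition of $d_{\mc A}$ near the identity.

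First I would dispose of the two formulations. Since translation by a torsion point is a homeomorphism of $A(\C_p)$ that preserves $A(\ov K)_\tor$, the group $A(\ov K)_\tor$ is discrete if and only if $0$ is isolated in it, which is exactly the assertion that some $\e>0$ bounds $d^{\mc U}(P,0)$ from below for every torsion $P\neq 0$; so the two statements are equivalent. (It is also immaterial whether one works with $\C_p$- or with $\ov K$-points, since each $A[n]$ is finite over $K$.) Next, after replacing $K$ by a finite extension --- which changes neither $\ov K$ nor, by \eqref{equi}, the truth of the statement --- the semistable reduction theorem lets us assume that $A$ is the generic fibre of a semiabelian scheme $\mc A\to S$. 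I would then compute with the affine covering of $A$ induced by $\mc A$ and the distance $d_{\mc A}$; by \eqref{equi} an $\e$ that works for this covering works, after rescaling, for every covering.

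With $\mc A$ fixed, the key step is to identify the torsion points lying close to $0$. Unwinding the definition of $d_{\mc A}$ in a chart around the identity section, a point $P\in A(\ov K)$ satisfies $d_{\mc A}(P,0)<1$ precisely when $P$ extends to $\mc A(\mc O_{\ov K})$ and its reduction in $A_0(\ov k)$ equals $0$ (if $P$ is not $\mc O_{\ov K}$-integral, or is integral but does not reduce to $0$, then $d_{\mc A}(P,0)=1$ by the conventions of the first section). Hence the set of torsion points $P$ with $d_{\mc A}(P,0)<1$ is exactly the kernel $N$ of the reduction map $\pi\colon\mc A(\mc O_{\ov K})_\tor\to A_0(\ov k)$ attached to the algebraic extension $\ov K/K$. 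Now I would invoke Proposition~\ref{formal}(a) with $K'=\ov K$: it gives that $N$ is discrete for the $p$-adic topology, and since $N$ is a subgroup of $A(\ov K)$ containing $0$, discreteness means that $0$ is isolated in $N$, say there is $\de\in(0,1)$ with the property that $Q\in N$ and $d_{\mc A}(Q,0)<\de$ force $Q=0$. Then $\e=\de$ works: if $P\in A(\ov K)_\tor$ has $d_{\mc A}(P,0)<\e<1$, then $P\in N$ by the identification above, and $d_{\mc A}(P,0)<\de$ then gives $P=0$. So every nonzero torsion point is at distance $\geq\e$ from $0$.

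The only point that really requires attention is the reduction to a semiabelian model, i.e.\ arranging that the hypothesis of Proposition~\ref{formal} is met; this is where the semistable reduction theorem is used, and it is a little delicate only for the torus part. Everything else is bookkeeping with the definition of $d_{\mc A}$, and the arithmetic heart --- that the additive formal group is torsion-free, so that the points of $\wh A$ reducing to $0$ contain no nonzero torsion in a small enough ball --- is already packaged inside Proposition~\ref{formal}.
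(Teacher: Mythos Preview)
Your argument is correct and follows essentially the same route as the paper: reduce (via a finite base change and the semistable reduction theorem) to the case where $A$ has a semiabelian model $\mc A/S$, identify the torsion points at distance $<1$ from $0$ with $\ker\pi$, and apply Proposition~\ref{formal}(a) with $K'=\ov K$. You have simply unpacked more of the bookkeeping (the equivalence of the two formulations, the role of integrality for $d_{\mc A}$) than the paper does.
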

\begin{proof}
Clearly, the statement of the conjecture for $X=0$ is that it is an isolated point in $A(\ov K)_\tor$, which means that this group is discrete for the $p$-adic topology.

Let $\mc A$ be a semiabelian model for $A$ over $S$. This exists after a finite extension of the base field $K$ according to the semi-stable reduction theorem for semiabelian varieties \cite[Th\'eor\`eme 3.6]{SGA7a}. The set $\{P\in A(\ov K)_\tor,\ d_{\mc A}(P,0)<1\}$ is precisely the kernel of the homomorphism $\pi$ defined in the previous proposition, so a neighborhood of $0$ is discrete in $A(\ov K)_\tor$, hence the whole group must also be discrete.
\end{proof}

% \begin{coro}
% The conjecture still holds when $X$ is a coset of a closed subgroup scheme, \emph{ie} if there exist a point $a\in A(\ov K)$ and a subgroup $G\subset A$ such that $X=G^{+a}$.
% \end{coro}
% \begin{proof}
% Using Remark \ref{stab}, we may reduce to the case where $G=0$. In that case, since the group $A(\ov K)_\tor$ is discrete, so is the set $A(\ov K)_\tor \cup \{a\}$.
% \end{proof}

From now on, we will assume that $X$ is an irreducible subscheme of $A$ with trivial stabilizer. This can be done without any loss of generality, because if $P\in A(\ov K)_\tor$ is close to a $X$, then it must be close to one of its irreducible components, and furthermore, by Proposition \ref{TV}.$ii)$, if $\Stab(X)=G\neq 0$, we may work in $X/G\inj A/G$ since
\[d_{A}(P,X)=d_{A/G}(\wt P,X/G),\]
where $\wt P$ is the reduction of $P$ in $A/G$.

\bigskip

\begin{rem}\label{ucosets}
Consequently to this reduction and Mattuck's theorem, the conjecture still holds if $X$ is a finite union of cosets of subgroupschemes, \emph{ie} subschemes of the form
\[\bigcup_{i\in I} G_i^{+a_i},\]
where $I$ is a finite index set, the $a_i$ are geometric points and the $G_i$ are group subschemes. Those subschemes, which will be of an interest to us, are called torsion subschemes, in reference to the Manin-Mumford conjecture.
\end{rem}

We will also make heavy use of the following lemma:

\begin{lem}\label{multiple}
 Suppose $X$ is irreducible and has trivial stabilizer in $A$ and let $N$ be a positive integer. If $Q_n\in A(\ov K)\setminus X(\ov K)$ is a sequence converging to $X$, then $[N]Q_n$ converges to $N\cdot X$ and either a subsequence of $Q_n$ converges to a proper closed subset of $X$ or there is a subsequence such that $d([N]Q_n,N\cdot X)>0$.
\end{lem}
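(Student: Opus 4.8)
The plan is to fix, after a finite extension of $K$ (as in the proof of Lemma \ref{zero}), a semiabelian model $\mathcal A\to S$ of $A$ and to compute all distances with $d_{\mathcal A}$, using for a model of a closed subscheme its schematic closure in $\mathcal A$. Each $Q_n$ is defined over a finite extension of $K$ and reduces into $\mathcal A$ as soon as $d(Q_n,X)<1$, so since $d(Q_n,X)\to 0$ we may discard the finitely many exceptional indices and assume throughout that the $Q_n$ are integral points, to which Proposition \ref{TV} applies. The convergence $[N]Q_n\to N\cdot X$ is then formal: since $[N]$ sends $X$ into $N\cdot X$ we have $X\subseteq [N]^{-1}(N\cdot X)$, so by Proposition \ref{TV}.ii) (applied to $[N]:\mathcal A\to\mathcal A$)
\[d([N]Q_n,\,N\cdot X)=d\bigl(Q_n,\,[N]^{-1}(N\cdot X)\bigr)\le d(Q_n,X)\longrightarrow 0.\]

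For the dichotomy, suppose the second alternative fails. Then $d([N]Q_n,N\cdot X)>0$ holds for only finitely many $n$, so after discarding them we may assume $[N]Q_n\in(N\cdot X)(\ov K)$ for every $n$. As $[N]$ is a proper isogeny and $N\cdot X=[N](X)$, on $\ov K$-points we have $[N]^{-1}(N\cdot X)=\bigcup_{t\in\ker[N](\ov K)}(X+t)$, a finite union of translates of $X$; hence each $Q_n$ lies in some $(X+t_n)(\ov K)$ with $t_n\in\ker[N]$, and by the pigeonhole principle we may pass to a subsequence along which $t_n$ is a fixed element $t$. Since $Q_n\notin X(\ov K)$, we have $t\neq 0$, and therefore $X\cap(X+t)$ is a \emph{proper} closed subset of $X$: were it equal to $X$ we would have $X\subseteq X+t$, and then equality of these irreducible subschemes of the same dimension would force $t\in\Stab(X)=0$. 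Finally, along our subsequence $d(Q_n,X+t)=0$ because $K$ is complete and $Q_n\in(X+t)(\ov K)$, so Proposition \ref{TV}.i) gives
\[d\bigl(Q_n,\,X\cap(X+t)\bigr)=\max\bigl(d(Q_n,X),\,d(Q_n,X+t)\bigr)=d(Q_n,X)\longrightarrow 0,\]
which says that this subsequence of $Q_n$ converges to the proper closed subset $X\cap(X+t)$ of $X$ — the first alternative.

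The computation is short; the only points needing a little attention are the passage to a model so that $\ov K$-points acquire a well-defined distance (handled by discarding finitely many terms) and the description of $[N]^{-1}(N\cdot X)$ on geometric points as a union of translates of $X$, which rests only on $[N]$ being a proper isogeny. The load-bearing idea — and the reason the trivial-stabilizer hypothesis is needed — is to pit $X\cap(X+t)\subsetneq X$ (for $t\neq0$) against the additivity $d(\cdot,X\cap Y)=\max(d(\cdot,X),d(\cdot,Y))$ of Proposition \ref{TV}.i); this is exactly what converts the statement that $[N]Q_n$ stays on $N\cdot X$ into the statement that $Q_n$ degenerates onto a proper subvariety of $X$.
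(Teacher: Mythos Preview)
Your argument is correct and follows essentially the same route as the paper's own proof: both deduce the convergence $[N]Q_n\to N\cdot X$ from Proposition~\ref{TV}, then from $[N]Q_n\in (N\cdot X)(\ov K)$ for almost all $n$ extract by pigeonhole a fixed $t\in\ker[N]$ with $Q_n\in X^{+t}(\ov K)$, and conclude via $\Stab(X)=0$ that $X\cap X^{+t}\subsetneq X$. You spell out a few points the paper leaves implicit (the passage to an integral model, the reason $t\neq 0$, and the use of Proposition~\ref{TV}.i) to combine the two convergences), but the substance is the same.
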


\begin{proof}
 It is clear from Proposition \ref{TV} that $d([N]Q_n,N\cdot X)$ converges to $0$. Now, suppose that $[N]Q_n\in N\cdot X(\ov K)$ for almost all $n$. Then $Q_n \in X(\ov K) + \ker[N]$, and since this kernel is finite, we may find $h\in\ker[N]$ and a subsequence such that $Q_n\in X^{+h}(\ov K)$ so $d(Q_n,X\cap X^{+h})$ converges to $0$. Since $\Stab(X)=0$, $X\cap X^{+h}$ is a proper closed subset of $X$ so the lemma is proved.
\end{proof}

\section{The unramified torsion}

As seen in the previous section, we consider a closed subvariety $X$ of a semiabelian variety $A$ both defined over a $p$-adic field $K$.

Until the end of this section, we restrict our problem to unramified torsion points, \emph{ie} points in $A(K^\nr)_\tor$. However, since it changes nothing to the truth of the conjecture, we will allow a change of base field to a (possibly ramified) finite extension of $K$ if necessary. Once $K$ is fixed, we choose $\tau\in\Gal(K^\nr/K)$ to be a lift of the Frobenius automorphism in $\Gal(\ov k/k)$ (recall that the residue field $k$ is finite).

\subsection{A Galois equation}

\begin{prop}\label{galois}
 For any semiabelian variety $A$ over $K$ there is a finite extension $K'$ of $K$ such that we may find a positive integer $N$ and a monic polynomial $F\in\Z[T]$ with no cyclotomic factors, such that
\[\forall P\in A(K'^\nr)_\tor,\ [N]F(\tau)(P)=0.\]
\end{prop}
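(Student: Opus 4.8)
The plan is to reduce to the three basic building blocks of a semiabelian variety --- the multiplicative part, the abelian part, and the lattice part encoded in the Tate module --- and to exhibit on each of them a polynomial relation in the Frobenius lift $\tau$ that is satisfied (up to a universal multiple $[N]$) by all unramified torsion. First I would split the torsion according to its order: write $A(K'^\nr)_\tor = A(K'^\nr)_\pp \oplus A(K'^\nr)_\ptor$, and treat the prime-to-$p$ part and the $p$-primary part separately, finally combining the two polynomials by taking their product and combining the two multiples $N$ by taking their least common multiple.

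For the prime-to-$p$ torsion the key observation is that, after a finite extension $K'/K$ making $A$ have semistable reduction, the prime-to-$p$ part of $A(K'^\nr)_\tor$ maps injectively (indeed isomorphically) to the torsion of the special fibre $A_0(\ov k)$ by Proposition \ref{formal}(a) (the kernel of reduction is a pro-$p$ group, so contains no prime-to-$p$ torsion). Hence the action of $\tau$ on this group is identified with the action of the Frobenius $\Frob_q$ on $A_0(\ov k)_\pp$. Now $A_0$ is a semiabelian variety over the finite field $k$, so it is an extension of an abelian variety $B_0$ by a torus $T_0$; on the torus part Frobenius satisfies $\Frob_q = [q]$ on geometric points (after a finite extension splitting the torus, so that $T_0 \cong \mb G_m^r$), hence $(\tau - q)$ kills $T_0(\ov k)_\pp$; and on $B_0$ the Frobenius endomorphism satisfies its characteristic polynomial $h_B \in \Z[T]$ of the $\ell$-adic Tate modules, by the Weil conjectures / Weil's theorem for abelian varieties over finite fields, which has all roots of absolute value $\sqrt q$ and in particular no root that is a root of unity. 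Pulling back along the extension $0 \to T_0 \to A_0 \to B_0 \to 0$, the polynomial $F_{\pp}(T) := (T-q)\,h_B(T)$, after multiplying by a suitable integer $N_1$ to clear the obstruction coming from the non-split extension (the connecting map in the long exact cohomology sequence is killed by a fixed integer on torsion), annihilates $A(K'^\nr)_\pp$, and it is monic with no cyclotomic factors.

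For the $p$-primary torsion I would instead work with the Tate module $T_p A$ directly: the Galois group $\Gal(\ov K / K')$ acts on $T_p A$, and the inertia subgroup acts through a finite quotient on the relevant torsion only after we have isolated the unramified part --- but here we are looking at $A(K'^\nr)_\ptor$, on which inertia acts trivially by definition, so the action factors through $\Gal(k/k')$ topologically generated by $\Frob_q = \bar\tau$. The point is that $T_p A \otimes \Q_p$ as a $\Gal(K^\nr/K)$-module is (after a finite extension) an extension whose semisimplification involves the $p$-adic Tate module of the torus part (where $\tau$ acts by the cyclotomic character, i.e.\ essentially by $q$ in the unramified direction after twisting) and of the abelian part, where the characteristic polynomial of Frobenius is again $h_B$; so the \emph{same} polynomial $F_{\pp}$, possibly enlarged to $F := F_{\pp}(T)\cdot(T - q)$ or similar and multiplied by another integer $N_2$, annihilates the $p$-primary unramified torsion as well. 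Taking $N = \mathrm{lcm}(N_1,N_2)$ and $F$ the product (removing repeated or cyclotomic factors) finishes the proof.

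The main obstacle I expect is \emph{not} the existence of the characteristic polynomial of Frobenius --- that is classical --- but rather the bookkeeping of the non-split extension $0\to T\to A\to B\to 0$ and the passage between "$\tau$ acts on torsion points" and "$\tau$ acts on Tate modules / special fibre". Concretely: (i) one must check that the connecting homomorphisms in the relevant long exact sequences are annihilated by a single integer $N$ independent of the torsion point, so that $N\cdot F(\tau)$ really kills everything and not just the associated graded; (ii) one must be careful that for the torus part "$\tau$ acts as $q$" holds only after a finite extension $K'$ that splits the torus, which is why the statement allows replacing $K$ by $K'$; and (iii) one must verify that $h_B$ genuinely has no cyclotomic factor, which follows from the Riemann hypothesis part of Weil's theorem (roots have absolute value $q^{1/2}\ne 1$) --- so the only truly delicate point is arranging a uniform $N$, and I would handle it by choosing $N$ to kill $B[\,\mathrm{tors}\,]^{\Frob=1}$-type obstructions and the torsion of $H^1$ of the extension class, both of which are bounded independently of the point.
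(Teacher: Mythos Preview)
Your overall strategy lands in the right place, but it is considerably more elaborate than the paper's, and your treatment of the $p$-primary part is not yet a proof. The paper does not split the torsion by order at all. Instead it observes that the desired property (the existence of $N$ and $F$ with $[N]F(\tau)$ killing all unramified torsion) is stable under group extensions: if $(N',F')$ works for $G'$ and $(N'',F'')$ for $G''$, then $(N'N'',\,F'F'')$ works for any extension of $G''$ by $G'$. This elementary remark immediately dissolves your worry (i) about connecting maps and $H^1$ of the extension class --- no such bookkeeping is needed. After passing to $K'$ so that $A$ is an extension of an abelian variety with semiabelian N\'eron model by a split torus, one is reduced to the two building blocks $\mb G_m$ and such abelian varieties. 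For each of these the paper uses Proposition~\ref{formal}(b) directly: the kernel of the reduction map $\mc A(\mc O_{K'}^\sh)_\tor \to A_0(\ov k)$ is \emph{finite}, so some $[N]$ factors the unramified torsion through the special fibre, where Frobenius satisfies $T-q$ (for $\mb G_m$) or the Weil polynomial with roots of absolute value $\sqrt q$ or $q$ (for the abelian variety). This $N$ is the entire source of the multiple in the statement; it has nothing to do with the extension structure.

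Your prime-to-$p$ argument is fine (indeed sharper, since the kernel of reduction is pro-$p$ and you may take $N_1=1$ there), but your $p$-primary paragraph is where the gap lies. You invoke $T_p A \otimes \Q_p$, the cyclotomic character, and ``twisting'', none of which is made precise, and in any case this is beside the point: you are only looking at $A(K'^\nr)_\ptor$, which sits inside $\mc A(\mc O^\sh_{K'})$, and Proposition~\ref{formal}(b) applies to it verbatim to give a finite kernel of reduction and hence the same $N$ and the same Frobenius polynomial on the special fibre as in the prime-to-$p$ case. Once you see this, the separate treatment of the two torsion components becomes redundant and the proof collapses to the paper's short argument.
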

\begin{rem}
 Saying that $F$ has no cyclotomic factors is equivalent to saying that it has no complex root which is a root of unity.
 \end{rem}

\begin{proof}
It is quite straightforward that this property is stable under group extensions, meaning that if we have a short exact sequence of group varieties
\[0\to G' \to G \to G''\to 0,\]
the property holds for $G$ if and only if it holds for $G'$ and $G''$ (if $F'$ and $F''$ are polynomials for $G'$ and $G''$ respectively, $F'F''$ is a polynomial for $G$, and the same holds for integers $N',N''$ and $N$).

Furthermore, after a finite extension of $K$, we may assume that $A$ is an extension of an abelian variety with semiabelian Néron model by a split torus. Hence we only need to prove it for the multiplicative group variety $\mb G_{m,K}$ and abelian varieties with semiabelian Néron model.

\bigskip

So let us assume first that $A=\mb G_{m,K}$. Then it has an obvious model $\mc A = \mb G_{m,S}$ over $S$, whose special fibre satisfy the equation \[(\Frob-[q])=0\in\End(A_0(\ov k)).\]

Furthermore, $A(K^\nr)_\tor\subset \mc A(S^\sh)$ because roots of unity are integers in $K^\nr$, so we may use Proposition \ref{formal}.b) to fix an integer $N$ such that $[N]\in\End(A(K^\nr)_\tor)$ factors through the reduction map $\pi:A(K^\nr)_\tor\to A_0(\ov k)$.

The last two results implies that $N(T-q)\in\Z[T]$ satisfies the desired equation (it is also obvious that it has no cyclotomic factors).

\bigskip

Suppose now that $A$ is an abelian variety, which has a semiabelian Néron model $\mc A\to S$. The Weil conjectures for semiabelian varieties over a finite field \cite{WeilConj} gives us the existence of a monic polynomial $F_0$ such that $F_0(\Frob)=0\in\End(A_0(\ov k))$. All of its complex roots are of absolute value $\sqrt{q}$ or $q$, so it cannot have cyclotomic factors.

Since $A(K^\nr)=\mc A(S^\sh)$, we may once again use \ref{formal},b) as above, to induce that $N\cdot F_0(T) \in\Z[T]$ satisfy the desired equation for the same integer $N$.
\end{proof}

\begin{rem}
 From the proof, we can see that, given that the order of $\ker \pi$ (from Proposition \ref{formal}) is a power of $p$, so is $N$ in this proposition.
\end{rem}

\subsection{Torsion subscheme in {$A^d$}}

Now let us assume we have made the necessary base field extension to apply Proposition \ref{galois}, so that we are given a positive integer $N$ and a monic polynomial $F$. We will now work in a bigger semiabelian variety, namely $A^d$ where $d$ is the degree of the polynomial $F$.

Let $M$ be the companion matrix to $F$, \emph{ie} if $F(x)=x^d-a_{d-1}x^{d-1}-\dots-a_0$,
\[M=\left[
\begin{array}{cccc}
0 & 1 & 0  & 0 \\
\vdots & 0 & \ddots & \vdots \\
0 & \cdots & \cdots &   1 \\
a_0 & a_1 & \cdots &  a_{d-1}
\end{array}\right].\]

We see $M$ as an isogeny of $A^d$, so we may build a subscheme
\[Z= Z_{X,F}:=\bigcap_{n\geq 0} M^n_*\left(\bigcap_{r\geq 0} M^{r,*}(X^d)\right)\subset X^d.\]
 Here the intersection are schematic (\emph{ie} fibre products of the immersions in $A^d$) and the upper and lower star denotes respectively the usual pull-back and schematic image by the morphism. $X^d$ is the cartesian product of the $d$ images of $X$ in $A^d$.

 Notice the following interesting properties of this subscheme

\begin{prop}\label{sigmaconv}
 \begin{enumerate}[i)]
  \item Set theoretically, $M(Z)=Z$. In particular, $Z$ is a torsion subscheme of $A^d$ \emph{ie} a finite union of cosets of closed subgroup schemes of $A$.
  \item Let $\s\in\Gal(\ov K/K)$ and $x_n\in A(\ov K)$ be a sequence which converges to $X^d$ and satisfies $F(\s)(x_n)=0$ for all  $n$. Then $x_{n,\s}:=(x_n,\s(x_n),\dots,\s^{d-1}(x_n))$ converges to $Z$.
 \end{enumerate}
\end{prop}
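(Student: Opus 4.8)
The plan is to prove both parts by analysing the descending chain of closed sets $\big(M^{n}(W)\big)_{n\ge0}$, where $W:=\bigcap_{r\ge0}M^{r,*}(X^{d})$, and, for part~(ii), by exploiting the single identity that the hypothesis $F(\s)(x_{n})=0$ forces on the twisted points. For part~(i): since the isogeny $M$ is finite, hence proper, set-theoretically $M^{r,*}(X^{d})=(M^{r})^{-1}(X^{d})$ and $M^{n}_{*}(W)=M^{n}(W)$, and from $W=\{y : M^{r}y\in X^{d}\ \text{for all}\ r\ge0\}$ one reads off immediately that $M(W)\subseteq W$. Hence the $M^{n}(W)$ form a descending chain of closed subsets of $A^{d}$, which stabilises by noetherianity: $M^{n}(W)=M^{n_{0}}(W)$ for all $n\ge n_{0}$, and this common value is exactly $Z$, so $M(Z)=M^{n_{0}+1}(W)=Z$ set-theoretically. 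To upgrade this to ``$Z$ is a finite union of cosets'' I would invoke the structure theorem for set-theoretically $M$-invariant closed subschemes of a semiabelian variety: by the proof of Proposition~\ref{galois} every root of $F$ is $q$ or a Weil number of absolute value $\sqrt q$ or $q$, so no eigenvalue of $M$ is a root of unity, and under this hypothesis any such subscheme is a finite union of (torsion) cosets; this is the one external input, for which I would cite \cite{PinkRo}.

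For part~(ii), write $y_{n}:=x_{n,\s}$ and let $\s$ act diagonally on $A^{d}$. The crucial point is that $F(\s)(x_{n})=0$ is equivalent to $M(y_{n})=\s(y_{n})$: by the definition of $x_{n,\s}$, the first $d-1$ coordinates of $M(y_{n})$ are $\s x_{n},\dots,\s^{d-1}x_{n}$, and its last coordinate is $a_{0}x_{n}+\dots+a_{d-1}\s^{d-1}x_{n}$, which equals $\s^{d}x_{n}$ precisely when $F(\s)(x_{n})=0$; since $M$ has integer entries it commutes with the diagonal Galois action, so $M^{r}(y_{n})=\s^{r}(y_{n})$ for every $r$ by induction. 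I would then show $d(y_{n},W)\to0$: the intersection defining $W$ may be taken finite (noetherianity again), so by Proposition~\ref{TV}.i) it suffices to control each $d(y_{n},M^{r,*}(X^{d}))$, which by Proposition~\ref{TV}.ii) equals $d(M^{r}(y_{n}),X^{d})=d(\s^{r}(y_{n}),X^{d})$, and this equals $d(y_{n},X^{d})$ by Proposition~\ref{TV}.iii), $X^{d}$ being defined over $K$; hence $d(y_{n},W)=d(y_{n},X^{d})=d(x_{n,\s},X^{d})\to0$. Finally, to descend from $W$ to $Z=M^{n_{0}}(W)$, I apply $M^{n_{0}}$ and use $M^{n_{0}}(y_{n})=\s^{n_{0}}(y_{n})$ together with the fact that $Z$ is defined over $K$:
\[
d(y_{n},Z)=d\big(\s^{n_{0}}(y_{n}),Z\big)=d\big(M^{n_{0}}(y_{n}),M^{n_{0}}_{*}(W)\big)\le d(y_{n},W)\longrightarrow0,
\]
the inequality being the elementary fact that the distance to a closed subscheme does not increase under a morphism of the integral models (if $P$ reduces into $\mc V$ modulo $\mf m^{k+1}$, then $f(P)$ reduces into $f_{*}\mc V$). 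This gives $x_{n,\s}\to Z$, as desired.

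I expect the main obstacle to be the structure theorem used in part~(i): the step from ``$M(Z)=Z$ set-theoretically'' to ``$Z$ is a finite union of cosets'' is the only genuinely non-elementary ingredient, and it rests essentially on $M$ having no root-of-unity eigenvalue. Everything else is bookkeeping — noetherian stabilisation of descending chains, together with the functoriality and Galois-equivariance of the $p$-adic distance recorded in Proposition~\ref{TV} and the identity $M(y_{n})=\s(y_{n})$ — except for one small point that requires attention: that closeness to $W$ already forces closeness to the (a priori much smaller) subscheme $Z$, which is exactly why the iterate $M^{n_{0}}$ has to be introduced rather than working with $W$ directly.
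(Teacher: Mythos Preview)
Your proof is correct and follows essentially the same approach as the paper's: the descending-chain argument for $M(Z)=Z$, the appeal to \cite{PinkRo} (the paper phrases it as ``torsion points are dense, hence Manin--Mumford applies'', which amounts to the same external input), and the identity $M(y_n)=\s(y_n)$ driving part~(ii) are all identical. Where the paper writes ``similarly, by Galois and algebraic equivariance'' for the passage from $W$ to $Z$, you spell this out via $d(y_n,Z)=d(\s^{n_0}y_n,Z)=d(M^{n_0}y_n,M^{n_0}_*W)\le d(y_n,W)$, which is a legitimate and slightly more explicit way to justify the same step.
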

 \begin{proof}
 Let us prove the set-theoretic equality $M(Z)=Z$. Since $M$ is proper, the underlying set of $Z$ can be described as
\[Z=\bigcap_{n\geq 0}M^n\left(\bigcap_{r\geq 0}M^{r,-1}(X^d)\right).\]
However, it is quite obvious that
\[M\left(\bigcap_{r\geq 0}M^{r,-1}(X^d)\right)\subset \bigcap_{r\geq 0}M^{r,-1}(X^d)\]
so we have a descending chain of subsets
\[\bigcap_{r\geq 0}M^{r,-1}(X^d)\supset M\left(\bigcap_{r\geq 0}M^{r,-1}(X^d)\right)\supset \dots\supset M^n\left(\bigcap_{r\geq 0}M^{r,-1}(X^d)\right)\supset\dots\]
which must stop descending by noetherianity, so there is an $l\geq 0$ such that
\[M^l\left(\bigcap_{r\geq 0}M^{r,-1}(X^d)\right)=M^{l+1}\left(\bigcap_{r\geq 0}M^{r,-1}(X^d)\right),\]
so the equality of sets $M(Z)=Z$ must be true.
Using \cite[Prop 6.1]{PinkRo}, the stability of $Z$ by $M$ implies that torsion points are dense (at least after pulling back to $\ov K$), so they are torsion subschemes by the Manin-Mumford conjecture for semiabelian varieties.

 For the second assertion, the Galois equation $F(\s)(x_n)=0$ implies that
 \[\s(x_{n,\s}):=(\s(x_n),\s^2(x_n),\dots,\s^d(x_n))=M(x_{n,\s})\]
 so the Galois action $\s$ is given by the algebraic morphism $M$ for all $x_{n,\s}$. Since $X^d$ is Galois invariant, we can compute

 \begin{equation*}
 \begin{split}
d\left(x_{n,\s},M^{r,*}(X^d)\right) = &\ d\left(M^r(x_{n,\s}),X^d\right) \\
= &\ d\left(\s^r(x_{n,\s}),X^d\right) \\
= &\ d\left(x_{n,\s},(X^d)^{\s^{-r}}\right) \\
= &\ d\left(x_{n,\s},(X^d)\right)
 \end{split}
 \end{equation*}

 so $x_{n,\s}$ must converge to the intersection $\ds{\bigcap_{r\geq 0} M^{r,*}(X^d)}$.
 Similarly, we deduce from this by using the Galois and algebraic equivariance of the distance that $x_{n,\s}$ converges to $Z$.
 \end{proof}

\subsection{Ultrafields}

In order to work with ramified torsion points, we will need to see the sequence $(P_n)$ as a single point in some large field over $K$. Let us first describe this setting and what result can be obtained by the use of ultrafilters.

The ideas in this subsection comes from an argument by Hrushovski given in \cite[p. 156]{Bousc}.

Let $\mf U$ be a fixed non-principal ultrafilter on $\N$. We give here the first properties of $\mf U$ as a subset of the powerset of $\N$:
\begin{itemize}
 \item $\N\in\mf U$;
 \item if $U\subset U'$ and $U\in\mf U$, then $U'\in\mf U$;
 \item if $U,V\in\mf U$, $U\cap V\in\mf U$;
 \item for any subset $U\subset\N$, either $U$ or $\N\setminus U$ is in $\mf U$, but not both;
 \item any cofinite set is in $\mf U$ (or equivalently, using the previous property, $\mf U$ contains no finite set).
\end{itemize}

For more results in the theory of ultrafilters and ultraproducts, see \cite{BourbGenTop}. Considering the last property, we will say that a predicate $P(n)$ on $\N$ is true for $\mf U$-almost all $n$ if the set of indices $\{n:\ P(n)\}$ is in $\mf U$.
%This is a much weaker notion than the usual one, since some elements in $\mf U$ are not cofinite.

\bigskip

Let $I_{\mf U}$ the ideal of the ring $R$ of bounded sequences in $\ov K$ defined by
\[(a_i)_{i\in\N}\in I_{\mf U} \ssi \forall\; n\in\N,\ \{i:\lvert a_i\rvert_p\leq p^{-n}\}\in\mf U.\]
This ideal is maximal in $R$, and we define $D=D_{\mf U}$ to be the quotient field. It has an obvious diagonal injection $\ov K\inj D$ and the Galois group action extends naturally. Notice that $D$ is a quotient of the ring of bounded elements in the usual ultrapower $\ov K^\N /\mf U$.

If we have a sequence of points $P_n\in A(\ov K)$, it defines naturally a point $P^*\in A(D)$. By construction, we have
\begin{equation}\label{ultrapoint}
 d(P_n,X)\xrightarrow{n\geq 0}0 \ssi P^*\in X(D).
\end{equation}

\begin{rem}
 In a way, going to the ultrafield $D$ allows us to get asymptotic information on the sequence $P_n$ without specifying one of the equivalent metrics described previously, thanks to the equivalence (\ref{ultrapoint}). Thus we do not need to choose any affine covering or integral model, which might prove difficult in the non proper case and adds information which is irrelevant for the problem at hand.
\end{rem}

\begin{prop}\label{inter}
 Let $F\in\Z[T]$ be a monic polynomial of degree $d$, and $X$ be a closed irreducible subvariety of a semiabelian variety $A$ with trivial stabilizer. Let $L$ be an algebraic extension of $K$. Then there is an $N\in\N$ such that for any $\s\in Gal(L/K)$ and any sequence $(P_n)$ in $A(L)$ whose distance to $X$ converges to $0$ satisfying
\[\forall\; n\in\N,\ F(\s)(P_n)=0,\]
 either a subsequence of $(P_n)$ converges to a proper closed subset of $X$ or $P^*\in X(D)$ satisfies the equation $(\s^N-1)(P^*)=0$.
\end{prop}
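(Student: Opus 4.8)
The plan is to feed the hypotheses into the machinery of Proposition \ref{sigmaconv} applied to the semiabelian variety $A^d$ and the subscheme $Z=Z_{X,F}$. First I would form the auxiliary sequence $P_{n,\s}=(P_n,\s(P_n),\dots,\s^{d-1}(P_n))\in A^d(L)$. Since each $P_n$ is close to $X$ and $\s$ acts isometrically on distances (Proposition \ref{TV}.iii), each coordinate $\s^j(P_n)$ is close to $X$, so $P_{n,\s}$ converges to $X^d$; then Proposition \ref{sigmaconv}.ii) tells us $P_{n,\s}$ converges to $Z$. Passing to the ultrafield via the equivalence (\ref{ultrapoint}), the ultrapoint $P^*_\s:=(P^*,\s(P^*),\dots,\s^{d-1}(P^*))\in A^d(D)$ lies in $Z(D)$. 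Also, the Galois relation $F(\s)(P_n)=0$ passes to the ultrafield, giving $\s(P^*_\s)=M(P^*_\s)$, i.e. the Galois action of $\s$ on $P^*_\s$ is realized by the algebraic automorphism $M$.

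Now I would exploit that $Z$ is a torsion subscheme of $A^d$ (Proposition \ref{sigmaconv}.i), hence by Remark \ref{ucosets} a finite union $\bigcup_{i\in I}G_i^{+a_i}$ of cosets of closed subgroup schemes, and moreover $M$ permutes its set-theoretic components since $M(Z)=Z$. The point $P^*_\s$ lies in one of these cosets, say $G^{+a}$, and applying $M$ (equivalently $\s$) repeatedly stays inside $Z$; since there are only finitely many components, some power $\s^{N_0}$ fixes the component containing $P^*_\s$ and its $M$-iterates. The key algebraic input is that on a single coset $G^{+a}$ of a group subscheme, if $M$ maps $G^{+a}$ into itself set-theoretically and fixes it, then on the quotient $A^d/(G\times\cdots)$ — more precisely, projecting onto the first factor and using that $\Stab(X)=0$ forces $Z$ to be "thin" in the relevant sense — the orbit of $P^*$ under $\s$ is finite, so some $(\s^N-1)(P^*)=0$. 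Concretely: either the first coordinate of $P^*_\s$ (which is $P^*$) is forced into a proper closed subscheme of $X$ — and then pulling this back, a subsequence of the $P_n$ converges to that proper closed subset, which is the first alternative — or $P^*$ genuinely lies on $X$ with $P^*$ in the "group part" of a coset, and the finiteness of the component set together with the companion-matrix dynamics forces $\s$ to act on $P^*$ with finite order, yielding $(\s^N-1)(P^*)=0$ for $N$ a common multiple of the relevant orbit lengths (which depends only on $Z$, hence only on $X$ and $F$, not on $\s$ or the sequence — this is where the uniform $N$ comes from).

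The main obstacle I expect is the dichotomy step: extracting, from "$P^*$ lies in a torsion subscheme $Z\subset X^d$ stable under $M$", the clean conclusion that either $P^*$ falls into a proper closed subset of $X$ or $\s$ acts with bounded finite order on $P^*$. The subtlety is that $Z$ being a union of cosets does not immediately pin down the first-coordinate projection; one must argue that the diagonal-type constraint $\s(P^*_\s)=M(P^*_\s)$ together with $\Stab(X)=0$ rules out the "intermediate" behaviour where $P^*$ stays on $X$ but moves through infinitely many Galois conjugates. I would handle this by looking at the group subscheme $G$ underlying the coset of $Z$ containing $P^*_\s$: its projection to the first factor is a subgroup of $A$, and $X$ being irreducible with trivial stabilizer constrains how a coset of $G$ can sit inside $X^d$; if the projection of $G$ to the first factor is nontrivial, one gets a translate relation forcing $P^*$ (a limit of the $P_n$) into $X\cap X^{+h}$ for some nonzero $h$, a proper closed subset (cf. Lemma \ref{multiple}); if it is trivial, then $G$ is contained in the kernel of the first projection, the first coordinate of $Z$ is a finite set of points of $X$, and $M$-stability makes the $\s$-orbit of $P^*$ finite with length bounded by $|I|$, giving the equation $(\s^N-1)(P^*)=0$ with $N=\mathrm{lcm}$ of orbit lengths dividing $|I|!$. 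Taking $N$ to depend only on $Z_{X,F}$ secures the required uniformity over all $\s$ and all sequences.
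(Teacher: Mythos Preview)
Your approach is essentially correct and differs from the paper's in one key respect. Both proofs begin identically: form $P_{n,\sigma}$, invoke Proposition~\ref{sigmaconv}.ii) to get convergence to $Z$, pass to the ultrafield so that $P^*_\sigma\in Z(D)$ with $\sigma(P^*_\sigma)=M(P^*_\sigma)$, and fix $m$ so that $M^m$ stabilizes each irreducible component of $Z$. The divergence is at the dichotomy step.

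The paper does \emph{not} use the coset structure of $Z$ here. For the component $T$ containing $P^*_\sigma$ it argues: either $\overline{\pi_*(T)}\subsetneq X$ (first alternative), or $\overline{\pi_*(T)}=X$, whence $\pi(\Stab(T))\subset\Stab(X)=0$ and $\pi$ factors through $T/\Stab(T)$; then Ueno's theorem says $T/\Stab(T)$ is of general type, Matsumura's theorem says its birational automorphism group is finite, so the induced automorphism $M'$ satisfies $M'^k=1$, and one reads off $\sigma^{km}(P^*)=P^*$.

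You instead exploit that Proposition~\ref{sigmaconv}.i) (via Manin--Mumford) already makes $Z$ a torsion subscheme, so each component is a coset $T=G^{+a}$ with $G$ connected. Your dichotomy on $\pi(G)$ then works: if $\pi(G)\neq 0$, then for any nonzero $h\in\pi(G)$ one has $P^*\in\pi(T)\subset X\cap X^{-h}$, a proper closed subset since $\Stab(X)=0$; if $\pi(G)=0$, then $\pi|_T\equiv\pi(a)$, so once $M^m$ stabilizes $T$ one gets $\sigma^m(P^*)=\pi(M^m(P^*_\sigma))=\pi(a)=P^*$. Two small corrections: it is the first coordinate of $T$, not of all of $Z$, that collapses to a point; and the clean justification for the finite $\sigma$-orbit is the one-line computation just given, with $N=m$ depending only on the component combinatorics of $Z_{X,F}$.

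What each buys: your route is more elementary, bypassing the general-type and finite-automorphism theorems entirely; the paper's route follows the Pink--R\"ossler template verbatim and would go through even without knowing $Z$ is a torsion subscheme. Since here $T$ is in fact a coset, $T/\Stab(T)$ is a single point, so the paper's appeal to Ueno--Matsumura is, in this particular application, heavier machinery than strictly needed.
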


\begin{proof}

This is an adaptation of the main idea behind the proof of the Manin-Mumford conjecture in characteristic $0$ as given by Pink and R\"ossler in \cite{PinkRoHru}.

We fix the morphism $\pi:A^d\to A$ to be the first projection. The second result of Proposition \ref{sigmaconv} implies that $P_{n,\s}$ converges to $Z$. Note that, since $F(\s)(P_n)=0$, $\pi M^k(P_{n,\s})=\s^k(P_n)$.

Since $M$ is an isogeny, for some $m>0$, $M^m$ stabilizes each of the irreducible components of $Z$. Let $T$ be one of these; we may assume that there is a $\s$ and a subsequence $Q_n$ of $P_{n,\s}$ that converges to it, otherwise we may take it out of $Z$ without loosing any of our assumptions.

Since $\pi(Q_n)$ converges to $\pi_*(T)$, we may assume that this scheme is dense in $X$, or we would have a subsequence converging to a closed proper subset of $X$, namely the Zariski closure of $\pi_*(T)$ (which may not be closed if $\pi$ is not proper).

This implies that $\pi(\Stab(T))\subset\Stab(X)=0$, so the projection $\pi$ induces a morphism $\pi_T:T/\Stab(T)\to A$.

Since $T$ is stabilized by $M^m$, it is also true of its stabilizer, so $M^m$ induces an isogeny $M'$ of $A/\Stab(T)$ stabilizing $T/\Stab(T)$, so in particular a birational automorphism of $T/\Stab(T)$. However, this variety is of general type \cite{Ueno}, so its group of birational automorphisms is finite \cite{Matsu}, and $M'^k=1$ for some $k$.

Since this is true for any irreducible component $T$, we may assume that $k$ satisfies this equation for any irreducible component $T$ of $Z$.

\bigskip

Since $P_{n,\s}$ converges to $Z$, $P_\s^*\in T(D)$ for some irreducible component $T$ of $Z$, so we have (writing $\ov {P_\s^*}$ for the image of $P_\s^*$ in $T/\Stab(T)$)
\begin{equation*}
 \begin{split}
\s^{km}(P^*)= &\ \pi M^{km}(P^*_\s) \\
            = &\ \pi_T M'^k(\ov{P^*_\s}) \\
            = &\ \pi_T (\ov{P^*_\s}) \\
            = &\ \pi (P^*_\s) \\
            = &\ P^*
 \end{split}
\end{equation*}
Notice that $N=km$ depends only of the construction from $X$ and $F$ so it is independent of $\mf U$, $\s$ and the sequence $P_n$.

\end{proof}

\subsection{The Tate-Voloch conjecture for unramified torsion points}

We are now ready to prove the following statement
\begin{thm}\label{unr}
Let $X\inj A$ be a closed subscheme of a semiabelian variety over $K$. Then for any distance $d$ as defined in the first section, there is $\e>0$ such that for any $P\in A(K^\nr)_\tor$, either $P\in X$ or $d(P,X)>\e$.
\end{thm}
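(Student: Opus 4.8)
The plan is to argue by contradiction using the ultrafield machinery, combined with an induction on $\dim X$. First I would reduce, exactly as explained after Lemma \ref{zero}, to the case where $X$ is irreducible with trivial stabilizer; the base case $\dim X = 0$ is Mattuck's theorem (Lemma \ref{zero}). So assume $X$ is irreducible with $\Stab(X) = 0$ and suppose, for contradiction, that there is a sequence of distinct torsion points $P_n \in A(K^\nr)_\tor \setminus X(\C_p)$ with $d(P_n, X) \to 0$. After the finite base extension furnished by Proposition \ref{galois} (which does not affect the truth of the statement), we obtain $N \in \N$ and a monic $F \in \Z[T]$ with no cyclotomic factor such that $[N]F(\tau)(P) = 0$ for all $P \in A(K^\nr)_\tor$, where $\tau$ is the chosen Frobenius lift. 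Passing to a subsequence, I would arrange (Lemma \ref{multiple} applied with $N$) that either a subsequence converges to a proper closed subset of $X$ — in which case we are done by the induction hypothesis, since every proper closed subscheme of $X$ has strictly smaller dimension — or $d([N]P_n, N\cdot X) > 0$ for all $n$. In the latter case I replace $F$ by $F$ itself acting via $\s = \tau$ after absorbing the multiplication by $N$: more precisely, set $Q_n = [N]P_n$, which is a torsion point converging to $N \cdot X$ and which satisfies $F(\tau)(Q_n) = 0$ on the nose (no leftover $[N]$), but one must first check $N \cdot X$ is still irreducible with trivial stabilizer and has the same dimension — which it is, as $[N]$ is an isogeny, so $\Stab(N \cdot X) = N \cdot \Stab(X) = 0$.

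Now I would apply Proposition \ref{inter} with $L = K^\nr$, $\s = \tau$, and the sequence $(Q_n)$ satisfying $F(\tau)(Q_n) = 0$. This yields $M \in \N$, depending only on $N \cdot X$ and $F$, such that (again after possibly passing to a subsequence that converges to a proper closed subset, handled by induction) the ultrafield point $Q^* \in (N\cdot X)(D)$ satisfies $(\tau^M - 1)(Q^*) = 0$. The crucial point is that $\tau^M$ generates the subgroup $\Gal(K^\nr/K_M)$ where $K_M$ is the unramified extension of $K$ of degree $M$, and all the $Q_n$ lie in $A(K^\nr)$; so $Q^*$ is fixed by an \emph{open} subgroup of $\Gal(K^\nr/K)$, hence $Q^*$ is defined, in the ultrafield sense, over the completion of $K_M$ — equivalently, the whole sequence $(Q_n)$ lies, up to the ultrafilter, in $A(K_M^\nr) = A(K_M)$ (for $K_M/K$ unramified, $K_M^\nr = K^\nr$, so this is really the statement that $\tau^M$ acts trivially on $Q^*$). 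The key equation $(\tau^M - 1)(Q^*) = 0$ together with $Q^* \in (N \cdot X)(D)$ means that $(\tau^M - 1)$ annihilates $Q^*$: but $\tau^M - 1$ is, on torsion points, governed by a polynomial with no cyclotomic factor only if we are careful — here instead the relevant fact is that an element of $A(\C_p)$ fixed by $\tau^M$ and reducing appropriately must, by Proposition \ref{formal}.b) applied over $K_M$, lie in a \emph{finite} group.

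The contradiction is then extracted as follows: $Q^*$ satisfies $(\tau^M - 1)(Q^*) = 0$, so every $Q_n$ (for $\mf U$-almost all $n$) satisfies $(\tau^M - 1)(Q_n) = 0$, i.e. $Q_n \in A(K_M)_\tor$; but $A(K_M)_\tor$ is a discrete subgroup of $A(\C_p)$ — indeed $A(K_M)_\tor$ is the increasing union over unramified extensions which, by Proposition \ref{formal}.b), has kernel-of-reduction finite, and by Mattuck applied over the \emph{finite} extension $K_M$ it is discrete. A discrete set of torsion points cannot have a subsequence approaching $N\cdot X$ without eventually lying in $N\cdot X(\C_p)$; but $Q_n \notin N \cdot X$ would follow from $P_n \notin X$ were $[N]$ injective, which it is not — so here I would instead argue directly that $d(Q_n, N\cdot X) \to 0$ with $Q_n$ in a discrete set forces $Q_n \in N \cdot X(\C_p)$ for large $n$, hence $d(Q_n, N\cdot X) = 0$, contradicting $d(Q_n, N \cdot X) > 0$ chosen above. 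The main obstacle, and the place requiring the most care, is the interplay between the multiplication by $N$ (which is forced on us by Proposition \ref{galois} and is genuinely necessary because of wild ramification in the reduction kernel) and the need to land in a setting where Proposition \ref{inter} applies cleanly with a polynomial equation rather than a polynomial-times-$[N]$ equation; reconciling these — essentially showing that one may push $X$ forward by $[N]$ and work there throughout, keeping irreducibility, trivial stabilizer, and the dimension intact — is the technical heart of the argument.
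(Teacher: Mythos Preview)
Your overall architecture matches the paper's exactly: induction on $\dim X$, reduction to $\Stab(X)=0$, invocation of Proposition~\ref{galois} to get $N$ and $F$, use of Lemma~\ref{multiple} to pass to $Q_n=[N]P_n$ and $N\cdot X$, and then Proposition~\ref{inter} to obtain $(\tau^M-1)(Q^*)=0$. You are also more careful than the paper in explicitly checking that $N\cdot X$ is again irreducible with trivial stabilizer (the paper silently uses this when applying Proposition~\ref{inter} to $Y=N\cdot X$); note however that the formula $\Stab(N\cdot X)=N\cdot\Stab(X)$ is not literally correct --- what is true, and what you need, is that $\Stab(X)=0$ implies $\Stab(N\cdot X)=0$, which follows by lifting a stabilizing element through the isogeny $[N]$ and using irreducibility of $X$.

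Where you diverge is in the endgame. The paper combines $(\tau^m-1)(P^*)=0$ with the exact relation $F(\tau)(P^*)=0$ and uses that $F$ has \emph{no cyclotomic factor}, so that $\gcd(F,T^m-1)=1$ in $\Q[T]$; B\'ezout then gives an integer $m'$ with $[m']P^*=0$, i.e.\ $P^*$ is a genuine geometric torsion point of $A$, and Mattuck finishes. You instead use $(\tau^M-1)(Q^*)=0$ alone, argue (essentially anticipating Lemma~\ref{descent}) that the torsion points $Q_n$ then lie in $A(K_M)$ for $\mf U$-almost all $n$, and conclude from $A(K_M)_\tor$ being small. This is a legitimate alternative and has the pleasant feature that it never uses the ``no cyclotomic factor'' clause of Proposition~\ref{galois}. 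Two points to tighten: first, the passage from $(\tau^M-1)(Q^*)=0$ in $D$ to $(\tau^M-1)(Q_n)=0$ for $\mf U$-almost all $n$ is exactly the argument of Lemma~\ref{descent} (torsion differences converging to $0$ are eventually $0$ by Mattuck), so you should either state that lemma here or reproduce its two-line proof. Second, and more importantly, ``discrete'' is not enough for your final contradiction --- a discrete infinite set can accumulate on a closed subscheme without meeting it. What you actually need, and what is true, is that $A(K_M)_\tor$ is \emph{finite}: the reduction map to $A_0(k_M)$ has finite kernel by Proposition~\ref{formal}(b) and finite image since $k_M$ is a finite field. Once you say ``finite'' instead of ``discrete'', a constant subsequence gives $d(Q,N\cdot X)=0$ with $Q\notin N\cdot X$, and you have your contradiction.
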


\begin{proof}

We procede by induction on the dimension of $X$. If it is $0$, then this is basically Mattuck's theorem.
Otherwise, assume $\Stab(X)=0$ (by taking the quotient if necessary), and let us assume we may find a contradicting sequence $Q_n\in  A(\mc O_K^\sh)$ whose distance to $X$ is strictly decreasing to $0$.
We put $\mc Y=N\cdot \mc X$, $Y=N\cdot X$ and $P_n=[N]Q_n$.
By Lemma \ref{multiple} and using the induction hypothesis, we may also assume (by taking a subsequence if necessary) that $P_n\notin Y(K^\nr)$.

The main proposition of the previous subsection then implies that we may find an integer $m$ such that $(\tau^m-1)(P^*)=0$ where $P^*$ is the image of the sequence $P_n$ in the ultrafield $D_{\mf U}$ for some (any) non-principal ultrafilter $\mf U$ on $\N$.

Since $F(T)$ and $T^m-1$ are coprime in $\Q[T]$ (because $F$ has no cyclotomic factor) this implies that, choosing $m'\in\N$ such that $m'\Z[T]\subset F(T)\Z[T] + (T^m-1)\Z[T]$, we have $[m']P^*=0$, so $P^*$ is a torsion point, which means that the sequence $P_n$ converges $p$-adically to a geometric point (because torsion points are defined on $\ov K$), which contradicts Mattuck's theorem.

Thus the theorem is proved.

% From Proposition \ref{galois}, $F(\tau)(P_n)=[N]F(\tau)(Q_n)=0$. This, and the remark before Lemma \ref{setMZ} show that for all $n$, $P'_n:=u(P_n)\in \mc Z(S^\sh_n)$, so $d_{\mc A^d}(P'_n,\mc Z)<\e_K^{n}$.
%
% Since $\mc Z$ is a quasi-compact $S$-scheme with $S^\sh_n$-sections for all $n\geq 0$, its generic fibre $Z$ cannot be empty. Therefore, using Lemma \ref{MM}, $Z$ must be a (non empty) finite union of cosets of subgroup schemes inside $A^d$, the generic fibre of $\mc A^d$. Therefore, using Remark \ref{ucosets}, the fact that $d_{\mc A^d}(P'_n,\mc Z)$ goes to $0$ is absurd, and the theorem is proved.
\end{proof}
%
% \begin{rem}
% Pushing-out $\mc Z$ to $\mc A$ through the first factor in $\mc A^d$ (and taking the Zariski closure in $\mc A$ if it is not a proper $S$-scheme) would give a closed subscheme $\mc T\inj\mc Y$ to which the $P_n$'s are necessarily converging, and which cannot be $\mc Y$ since $Y$ is assumed irreducible and its stabilizer trivial (since the first projection is a homomorphism, $\mc T$ must be a finite union of cosets of subgroup schemes). By induction, this gives another way to finish the proof.
% \end{rem}

\section{The ramified prime-to-{$p$} torsion}

In order to prove the conjecture in the ramified case, we have to refine the previous result. Recall that for any torsion group $G_\tor$ we have
\[G_\tor = G_\pp \oplus G_\ptor.\]
Thus we may decompose any torsion point in its $p$-primary part and its prime-to-$p$ part. In this section we intend to prove the conjecture for points $P\in A(K^\nr)_\tor + A(\ov K)_\pp$, \emph{ie} when we assume the $p$-primary part is still unramified, thus only the prime-to-$p$ part may be ramified.

\subsection{Galois results}

This case will reduce to the previous one thanks to the following result on ramification for prime-to-$p$ torsion points:

\begin{prop}\label{galeq}
 For any semiabelian variety $A$ over a $p$-adic field $K$, there exists a finite extension $K'$ of $K$ such that,
\begin{equation}\label{ramif}
\forall\; P\in A(K'^\nr)_\pp,\ \forall\; \s\in\Gal(\ov {K'}/ K'^\nr),\ (\s-1)^3(P)=0.
\end{equation}
\end{prop}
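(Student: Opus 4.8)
The plan is to reduce to the already-understood behaviour of torsion on tori and on abelian varieties with semiabelian reduction, using the fact — as in the proof of Proposition~\ref{galois} — that the sought-after property (here: annihilation by $(\s-1)^3$ for prime-to-$p$ torsion and $\s$ in the inertia of $K'^\nr$) is stable under extensions of group varieties. Indeed, given a short exact sequence $0\to G'\to G\to G''\to 0$ and $P\in G(K'^\nr)$ prime-to-$p$ torsion, its image $\bar P\in G''$ satisfies $(\s-1)^{3}\bar P = 0$, so $(\s-1)^{3}P$ lands in $G'$; but $(\s-1)^{3}P$ is again prime-to-$p$ torsion and fixed modulo $G'$-translation, hence lies in $G'(K'^\nr)$ up to the relevant inertia action, and one more application of $(\s-1)^{3}$ on $G'$ kills it. This gives a bound of the shape $(\s-1)^{3+3}=(\s-1)^6$ a priori; to recover the exponent $3$ stated, I would instead note that each factor contributes only a \emph{single} $(\s-1)$ to the image (the argument is the usual one for filtered modules over $\Z[\s]$: if $M$ has a two-step filtration with $(\s-1)M''=0$ and $(\s-1)M'=0$ then $(\s-1)^2M=0$), so after passing to a finite extension $K'$ over which $A$ becomes an extension of an abelian variety with semiabelian Néron model by a split torus — a two-step filtration — it suffices to prove $(\s-1)^{?}$ kills prime-to-$p$ torsion in each of the two pieces, with the total exponent being the sum. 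The "$3$" will come out as $1+2$ or $2+1$ once the two base cases are settled.

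For the \textbf{torus case} $A=\mb G_m$: a prime-to-$p$ torsion point is a root of unity $\zeta$ of order $\ell$ prime to $p$. Over $K^\nr$ the extension $K^\nr(\zeta)/K^\nr$ is \emph{tamely} ramified (its degree divides $\ell$, prime to $p$), hence cyclic, and the inertia action on $\mu_\ell$ is via a character. The key point is Kummer theory: after adjoining enough roots of unity (absorbed into the finite extension $K'$), a uniformizer $\varpi$ has its $\ell$-th roots generating the ramified part, and $\s$ acts on $\varpi^{1/\ell}$ by a root of unity — but $\zeta$ itself, being a root of unity, is \emph{unramified} once we have arranged $\mu_\infty^{(p')}\subset K'^\nr$, so in fact $(\s-1)\zeta=0$ after the base change. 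More carefully, the prime-to-$p$ torsion of $\mb G_m(\ov{K'})$ is contained in $K'^\nr$ after adjoining all prime-to-$p$ roots of unity to $K'$, so the torus contributes exponent $0$ (or $1$ before the base change). For the \textbf{abelian variety case} with $\mc A\to S$ semiabelian: the prime-to-$p$ torsion of $\mc A(\mc O_{\ov{K'}})$ injects into $A_0(\ov k)$ under reduction (Proposition~\ref{formal}, since the kernel is a pro-$p$ group), so the inertia subgroup $\Gal(\ov{K'}/K'^\nr)$ acts \emph{trivially} on the prime-to-$p$ torsion coming from the abelian-variety quotient; the genuinely ramified contribution is concentrated in the toric part of the semiabelian reduction, where one reruns the torus analysis. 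Tracking the unipotency of the inertia action on the Tate module of a semiabelian variety (the monodromy is unipotent with a single Jordan block of size $\le 2$ per toric dimension) yields $(\s-1)^2=0$ on that piece, giving the exponent $2$, and $1+2=3$ overall once the split-torus part and the abelian part are recombined.

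The \textbf{main obstacle} I expect is the bookkeeping of which finite extension $K'$ is needed and making sure the inertia action is genuinely unipotent of the claimed index, rather than merely of bounded index. This is exactly where the semistable reduction theorem \cite[Th\'eor\`eme 3.6]{SGA7a} enters: over a suitable $K'$, $A$ acquires semiabelian reduction, the inertia acts through its tame quotient on the prime-to-$p$ Tate module (wild inertia being pro-$p$, it acts trivially on $T_\ell$ for $\ell\neq p$), and the tame quotient is procyclic, so a single topological generator $\s$ controls everything; the unipotence with Jordan blocks of size $\le 2$ is the standard structure of the monodromy filtration on $T_\ell$ of a semiabelian variety, giving $(\s-1)^2 = 0$ on the weight-$\le 1$ graded pieces and hence $(\s-1)^3=0$ after the one extra step from the toric/abelian filtration. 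I would also need to check that the choice of $\s$ (lift of a generator of tame inertia) does not matter because any two such generators differ by an element of wild inertia and by a power, both of which preserve the vanishing. Once unipotence of index $\le 3$ is in hand, the conclusion is immediate.
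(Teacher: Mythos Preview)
Your overall architecture matches the paper's exactly: pass to a finite extension $K'$ over which $A$ sits in a short exact sequence $0\to T\to A\to A'\to 0$ with $T$ a split torus and $A'$ an abelian variety with semistable reduction; get exponent $1$ on $T$ (prime-to-$p$ roots of unity are unramified), exponent $2$ on $A'$, and add to obtain $3$. The paper simply cites \cite[Prop.~5.8]{Abbes} for the exponent-$2$ bound on $A'$ and then runs the one-line filtration argument you describe.

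There is, however, a genuine misstep in your treatment of the abelian-variety piece. You argue that the prime-to-$p$ torsion of $\mc A'(\mc O_{\ov{K'}})$ injects into $A'_0(\ov k)$ and that inertia acts trivially there, concluding $(\s-1)=0$. This is false: take a Tate curve $E=\mb G_m/q^{\Z}$ and $\ell\nmid p$; the torsion point $q^{1/\ell}$ is genuinely ramified, and $(\s-1)(q^{1/\ell})=\zeta_\ell^a\neq 0$ for a suitable $\s$ in inertia. The failure is that the N\'eron model over $\mc O_{K'}$ does not receive all of $A'(\ov{K'})$ as integral points once you allow ramified extensions --- the N\'eron mapping property only applies to \'etale (hence unramified) test schemes --- so Proposition~\ref{formal} does not give you control over ramified torsion. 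You implicitly recognize this when you pivot to ``tracking the unipotency of the inertia action on the Tate module'' with Jordan blocks of size $\le 2$; that statement is exactly the content of the result the paper cites from \cite{Abbes} (equivalently, Grothendieck's monodromy theorem in SGA~7), and once you invoke it the argument is complete. So your proposal is correct provided you drop the reduction-injectivity paragraph and take the $(\s-1)^2=0$ bound on $A'$ as the input from the literature, which is precisely what the paper does.
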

\begin{proof}
 Over a suitable $K'$, we may assume that $A$ is the extension of an abelian variety $A'$ with semi-stable reduction by a split torus $T$. In $A'$, we have the analog assertion \cite[Prop. 5.8]{Abbes}
\[\forall\; P'\in A'(K'^\nr)_\pp,\ \forall\; \s\in\Gal(\ov {K'}/ K'^\nr),\ (\s-1)^2(P')=0\]
and in the torus, the prime to $p$ torsion is unramified, (because it is true in $\mb G_m$).

Therefore, $(\s-1)^2(P)$ reduces to $0\in A'(\ov {K'})$, so it lies in $T(\ov {K'})_\pp$, hence it is unramified and $(\s-1)^3(P)=0$.
\end{proof}

This equation will be a crucial element in the proof of the conjecture, and it also allows reduction to tame ramification, thanks to the following corollary, which is a close adaptation of its analog on abelian varieties (see \cite[Lemma A.1]{BaRi}):

\begin{coro}
 The prime-to-$p$ torsion points are defined over a tamely ramified extension of $K'$.
\end{coro}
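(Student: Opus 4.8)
The plan is to deduce this corollary directly from the equation $(\s-1)^3(P)=0$ established in Proposition \ref{galeq}, exploiting the structure of the wild inertia subgroup. First I would recall that the Galois group $\Gal(\ov{K'}/K'^\nr)$ is the inertia group $I$ of $K'$, and that it sits in an exact sequence $1\to P\to I\to I^\ta\to 1$, where $P$ is the wild inertia, a pro-$p$ group, and $I^\ta\cong\prod_{\ell\neq p}\Z_\ell$ is the tame quotient. Saying that the prime-to-$p$ torsion points are defined over a tamely ramified extension of $K'$ is exactly saying that the wild inertia $P$ acts trivially on $A(\ov{K'})_\pp$.

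Next I would fix a prime-to-$p$ torsion point $P_0\in A(\ov{K'})_\pp$, say of order $M$ coprime to $p$, and consider the action of $\s\in P$ (wild inertia). By Proposition \ref{galeq} we have $(\s-1)^3(P_0)=0$, i.e. $\s$ acts on the finite subgroup generated by the Galois orbit of $P_0$ as a unipotent operator of nilpotency order at most $3$, so $(\s-1)^3$ annihilates it. The key point is that this subgroup is a finite abelian group of order prime to $p$, so the operator $\s-1$ is nilpotent on a $p$-torsion-free module; but a unipotent automorphism $\s$ of a finite group $G$ of order prime to $p$, where $\s$ itself has $p$-power order (since it lies in the pro-$p$ group $P$), must be trivial. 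Indeed, write $u=\s-1$; then $u^3=0$, and $\s^{p^k}=\mathrm{id}$ for some $k$, so $0=(\s^{p^k}-1)=((1+u)^{p^k}-1)=p^k u + \binom{p^k}{2}u^2$ (the $u^3$ and higher terms vanish). Acting on $G$, this reads $p^k u + \binom{p^k}{2} u^2 = 0$. Since $p$ is a unit on $G$ (as $|G|$ is prime to $p$), one can invert the relevant scalar: factor as $u\bigl(p^k + \binom{p^k}{2}u\bigr)=0$, and the second factor is $p^k$ times a unit plus a nilpotent, hence invertible on $G$; therefore $u=0$ on $G$, i.e. $\s$ fixes $P_0$.

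Since this holds for every $\s$ in the wild inertia $P$ and every prime-to-$p$ torsion point $P_0$, the subgroup $P$ acts trivially on $A(\ov{K'})_\pp$, which means every such point is fixed by $P$ and hence lies in the maximal tamely ramified extension $(K'^\nr)^{\ta}$ of $K'^\nr$ (equivalently, in a finite tamely ramified extension of $K'$, since each individual point has finite orbit). The main obstacle I anticipate is the arithmetic bookkeeping in the last step: one must be careful that the nilpotency order $3$ interacts correctly with the binomial expansion of $\s^{p^k}=(1+u)^{p^k}$ — for $p=2$ the term $\binom{2^k}{2}u^2$ needs separate attention since $\binom{2^k}{2}=2^{k-1}(2^k-1)$ is still divisible by $2$ only to order $k-1$, but as $u^2$ is itself killed after one more application of $u$ and $2$ remains invertible on an odd-order group, the conclusion $u=0$ still goes through. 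Writing this out cleanly — perhaps by first reducing mod the prime decomposition of $|G|=\prod \ell^{a_\ell}$ and treating each $\ell$-primary piece, where $\ell\neq p$ forces $p$ to be invertible — is the one place where care is required; everything else is a formal unwinding of the inertia filtration, exactly as in the cited Lemma A.1 of \cite{BaRi}.
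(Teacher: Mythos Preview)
Your proposal is correct and follows essentially the same approach as the paper: both use that wild inertia is pro-$p$, combine the relation $(\s-1)^3=0$ with $\s^{p^k}=1$ on a finite prime-to-$p$ subgroup, and conclude $(\s-1)=0$ there because $p$ is invertible. The only cosmetic difference is that the paper carries out an explicit Euclidean-algorithm computation in $\Z[T]$ to exhibit $p^{k'}(T-1)\in(T-1)^3\Z[T]+(T^{p^k}-1)\Z[T]$ (with a case split at $p=2$), whereas your ``unit plus nilpotent is a unit'' factorization in $\End(G)$ is slightly slicker and handles all $p$ uniformly.
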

\begin{proof}
 It is known \cite{SerreCorpsloc} that the wild ramification group $G:=\Gal(\ov {K'}/K'^\ta)$ is a $p$-profinite group. For any finite group $M\subset A(\ov {K'})_\pp$, the action of $G$ on $M$ must factor through a finite $p$-group $G'$, say of order $q=p^k$. Let $I=(T-1)^3\Z[T]+(T^q-1)\Z[T]$ so that if $F\in I$, $F(\s)M=0$.

 Euclid's algorithm in $\Q[T]$ shows that
\begin{align*}
q(T-1)& =(T-1)^3\left(\frac{(q-1)T+(q+1)}{2}Q(T)+q\frac{(q-1)^2}4\right)\\
 &\quad -(T^q-1)\frac{(q-1)T+(q+1)}{2},
\end{align*}

 where $Q\in\Z(T]$ is the euclidian quotient of $T^q-1$ by $(T-1)^3$.
 Hence, if $p$ is odd, $q(T-1)\in I$, and if $p=2$, $4q(T-1)\in I$.
 Therefore $p^{k'}(\s-1)M=0$ for some power $k'$ (which can be taken to be $k$ if $p$ is odd and $k+2$ if $p=2$), but $p$ is invertible in $M$ so $\s$ acts trivially on $M$. since this is true for all $\s$, and $A(\ov K)_\pp$ is clearly covered by its finite subgroups, the prime-to-$p$ torsion points must be at most tamely ramified.
\end{proof}

\subsection{The Tate-Voloch conjecture in $A(K^\nr)_\tor+ A(\ov K)_\pp$}

The previous subsection has given us a Galois equation, so we will be able to apply Proposition \ref{inter}. However, we will need some ``descent'' result to reduce to the unramified case. This is where the following lemma is used:

\begin{lem}\label{descent}
 Let $L'/L$ be a Galois extension of algebraic overfields of $K$ with topologically finitely generated Galois group $G$. Let $Q_n$ be a sequence of torsion points in $A(L')_\tor$.
 If $Q^*$ is fixed by $G$, then $Q_n\in A(L)$ for $\mf U$-almost all $n$.
\end{lem}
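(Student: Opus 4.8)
The plan is to argue by contradiction on the individual Galois orbits of the $Q_n$, exploiting the fact that $Q^*$ being fixed by $G$ means that for each $\sigma\in G$ the point $\sigma(Q^*)$ agrees with $Q^*$ in the ultrafield $D$, hence $d(\sigma(Q_n),Q_n)\to 0$ along $\mathfrak U$; combined with Mattuck's theorem (Lemma \ref{zero}), a point in $A(L')_\tor$ which is $p$-adically close enough to another torsion point must actually equal it. So first I would fix, by Lemma \ref{zero}, an $\varepsilon_0>0$ such that any two distinct torsion points in $A(\ov K)_\tor$ are at distance $>\varepsilon_0$ (i.e. the torsion subgroup is uniformly discrete), using that $A(\ov K)_\tor$ is discrete. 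The goal is then to produce, for $\mathfrak U$-almost all $n$, the equality $\sigma(Q_n)=Q_n$ for every $\sigma\in G$, which gives $Q_n\in A(L'^G)=A(L)$ as $L'/L$ is Galois.

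The subtlety is the quantifier order: for a single $\sigma$, the hypothesis $\sigma(Q^*)=Q^*$ in $A(D)$ gives $d(\sigma(Q_n),Q_n)\to_{\mathfrak U}0$ by the defining property \eqref{ultrapoint} of $D$ (applied to the pair of sequences, or to their difference $\sigma(Q_n)-Q_n$ together with the subscheme $0$, using Proposition \ref{TV}), hence $\{n:\ \sigma(Q_n)=Q_n\}\in\mathfrak U$ by the choice of $\varepsilon_0$; but $G$ may be infinite, and an infinite intersection of sets in $\mathfrak U$ need not lie in $\mathfrak U$. This is where topological finite generation of $G$ is used. Pick topological generators $\sigma_1,\dots,\sigma_r$ of $G$. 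By the finitely many applications of the previous paragraph, the set $U:=\bigcap_{j=1}^r\{n:\ \sigma_j(Q_n)=Q_n\}$ lies in $\mathfrak U$. For $n\in U$, the stabilizer of $Q_n$ in $G$ is a closed subgroup containing all the $\sigma_j$, hence containing the closed subgroup they topologically generate, which is $G$ itself; therefore $Q_n$ is fixed by all of $G$, i.e. $Q_n\in A(L)$. Since $U\in\mathfrak U$, this holds for $\mathfrak U$-almost all $n$.

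I would write out the one genuinely non-formal step carefully, namely the passage ``$\sigma(Q^*)=Q^*$ in $A(D)$ $\Rightarrow$ $d(\sigma(Q_n),Q_n)\to_{\mathfrak U}0$''. Here one should note that the Galois action extends to $D$ (as recorded when $D$ was constructed), so $\sigma(Q^*)$ is the point of $A(D)$ attached to the sequence $(\sigma(Q_n))_n$; then $\sigma(Q^*)=Q^*$ means the difference sequence $(\sigma(Q_n)-Q_n)_n$ maps to $0\in A(D)$, which by \eqref{ultrapoint} (with $X=\{0\}$, and Proposition \ref{TV}.ii to identify $d(\sigma(Q_n)-Q_n,0)$ with $d(\sigma(Q_n),Q_n^{+Q_n})$, or simply directly from the construction of $D$) is exactly $\lim_{\mathfrak U} d(\sigma(Q_n),Q_n)=0$. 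The only mild obstacle is bookkeeping about $\mathfrak U$-limits rather than ordinary limits, but the ideal $I_{\mathfrak U}$ was defined precisely so that ``$\mathfrak U$-almost all $n$'' behaves like ``cofinitely many $n$'' for these purposes; no serious difficulty arises. The main conceptual point, and the reason the hypothesis on $G$ is needed, is the reduction from an a priori infinite family of conditions to the finite family indexed by a set of topological generators.
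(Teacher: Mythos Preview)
Your proof is correct and follows essentially the same approach as the paper's. Both arguments use Mattuck's theorem to pass from $\sigma(Q^*)=Q^*$ in $A(D)$ to $\sigma(Q_n)=Q_n$ for $\mathfrak U$-almost all $n$, then invoke topological finite generation of $G$ together with closedness of stabilizers to reduce the infinite family of conditions to a finite intersection in $\mathfrak U$. The paper organizes this slightly differently, first proving an auxiliary statement that $(L'^{\mathbb N}/\mathfrak U)^G = L^{\mathbb N}/\mathfrak U$ and then feeding the point into it, whereas you work directly with the points $Q_n$; your version is in fact a bit more streamlined, but the content is identical.
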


\begin{proof}
 First notice that the $G$-fixed field of the ultrapower $F:=L'^\N/\mf U$ is precisely $F^G=L^\N/\mf U$ (\emph{ie} the ultrapower on $L$, which is a natural subfield of $F$). Indeed, if $(x_n)\in F$ is fixed by $g\in G$, Then $g(x_n)=x_n$ for $\mf U$-almost all $n$. Since we may find a finite set of topological generators for $G$, and $\mf U$ being stable by finite intersection, $x_n\in L$ for $\mf U$-almost all $n$, so $(x_n)\in L^\N/\mf U$.

 Now, let us come back to $(Q_n)=Q^*\in A(D)$. The fact that it is $g$-invariant for some $g\in G$ means that for all $k$, $d(Q_n,g (Q_n))\leq p^{-k}$ for $\mf U$-almost all $n$. But the set of torsion points in $A$ is discrete for the metric by Mattuck's theorem, so $Q_n=g(Q_n)$ for $\mf U$-almost all $n$, so the sequence $(Q_n)$ defines a point in $A(F^G)$. Hence $Q_n\in A(L)$ for $\mf U$-almost all $n$ by the previous discussion.
\end{proof}

%We will also make use of the generalized Frobenius equation which can be derived from \cite[IX \S4, Corollary 4.4]{SGA7a}: for some monic polynomial $F\in\Z[T]$ and integer $N$ (they can be seen to be the same as in the unramified case)
%
%\begin{equation}\label{frob}
% \begin{split}
%   \forall\; P\in A(K^\nr)_\tor+ & A(\ov K)_\pp,\ \forall\; \tau\in\Gal(\ov K/K), \\
%  & \tau\ \text{is a Frobenius lift} \implies [N]F(\tau)(\tau-1)(P)=0.
%  \end{split}
%\end{equation}

We are now able to state and prove the conjecture in this setting:

\begin{thm}\label{pp}
 Let $X\inj A$ be a closed subscheme of a semiabelian variety over $K$. Then for any distance $d$ as defined in the first section, there is $\e>0$ such that for any $P\in A(K^\nr)_\tor+ A(\ov K)_\pp$, either $P\in X$ or $d(P,X)>\e$.
\end{thm}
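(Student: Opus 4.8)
The plan is to reduce the statement to the already-established unramified case, Theorem~\ref{unr}, by exploiting the fact that the prime-to-$p$ ramification of torsion points is tame and ``bounded''. We argue by induction on $\dim X$, the case $\dim X=0$ being Mattuck's Lemma~\ref{zero}. For the inductive step, replacing $A$ by $A/\Stab(X)$ we may assume $X$ irreducible with trivial stabilizer, and we suppose for contradiction that there is a sequence $(Q_n)$ in $A(K^\nr)_\tor+A(\ov K)_\pp$ with $Q_n\notin X(\ov K)$ and $d(Q_n,X)$ strictly decreasing to $0$. Enlarging $K$ by a finite extension --- which affects neither the truth of the statement (by the equivalence~(\ref{equi}) and Proposition~\ref{TV}) nor the inclusion $Q_n\in A(K^\nr)_\tor+A(\ov K)_\pp$ --- we may assume, by Proposition~\ref{galeq} and the Corollary following it, that every prime-to-$p$ torsion point of $A$ is rational over the maximal tamely ramified extension $K^\ta$ and is annihilated by $(\s-1)^3$ for every $\s\in\Gal(\ov K/K^\nr)$. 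Fix a topological generator $\ga$ of the tame inertia group $\Gal(K^\ta/K^\nr)\cong\prod_{\ell\neq p}\Z_\ell$, viewed inside $\Gal(K^\ta/K)$.

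Writing $Q_n=a_n+b_n$ with $a_n\in A(K^\nr)_\tor$ (fixed by $\ga$, since $\ga$ fixes $K^\nr$) and $b_n\in A(\ov K)_\pp$ (annihilated by $(\ga-1)^3$, by Proposition~\ref{galeq}), we get $(\ga-1)^3(Q_n)=0$ for all $n$, and in particular all $Q_n$ lie in $A(K^\ta)$. We may therefore apply Proposition~\ref{inter} with $L=K^\ta$, with $\s=\ga$, and with the monic polynomial $F(T)=(T-1)^3$: there is an integer $N$, depending only on $X$ and $F$, such that either a subsequence of $(Q_n)$ converges to a proper closed subset $X'$ of $X$, or the ultrafield point $Q^*\in A(D_{\mf U})$ satisfies $(\ga^N-1)(Q^*)=0$. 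In the first alternative, $Q_n\notin X'(\ov K)$ for $\mf U$-almost all $n$ (otherwise $Q_n\in X$), so along a subsequence we have $Q_n\notin X'$ and $d(Q_n,X')\to 0$ with $\dim X'<\dim X$, contradicting the induction hypothesis. Hence the second alternative holds.

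It remains to descend. The closed subgroup $\ov{\langle\ga^N\rangle}$ of $\Gal(K^\ta/K^\nr)$ has finite index, and its fixed field is a finite totally tamely ramified extension $L$ of $K^\nr$; such an extension is necessarily of the form $K^\nr(\pi^{1/d})=M^\nr$, where $\pi$ is a uniformizer of $K$, $d=[L:K^\nr]$, and $M:=K(\pi^{1/d})$ is a finite extension of $K$. Since $\ga^N$ is a topological generator of $G:=\Gal(K^\ta/M^\nr)=\ov{\langle\ga^N\rangle}$ and fixes $Q^*$, Lemma~\ref{descent} applies and gives $Q_n\in A(M^\nr)$ for $\mf U$-almost all $n$; being torsion, these points lie in $A(M^\nr)_\tor$. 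Now apply Theorem~\ref{unr} over the $p$-adic field $M$ to $X\inj A$: it produces an $\e>0$ such that $d(P,X)>\e$ for every $P\in A(M^\nr)_\tor\setminus X(\ov K)$. This is impossible, since $d(Q_n,X)\to 0$ while $Q_n\notin X(\ov K)$ for infinitely many such $n$. This contradiction completes the induction, and the theorem follows.

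The step demanding the most care is the descent: one must recognise the fixed field of $\ov{\langle\ga^N\rangle}$ as $M^\nr$ for a \emph{genuine} finite extension $M/K$, so that Theorem~\ref{unr} --- which concerns torsion rational over the maximal unramified extension of a $p$-adic field --- actually applies; this is where the classification of tame extensions of $K^\nr$ enters. The remaining ingredients are routine: the reductions at the outset mimic those in the proof of Theorem~\ref{unr}, and checking that a ramified prime-to-$p$ torsion point still satisfies the clean Galois equation $(\ga-1)^3(Q_n)=0$ is immediate from Proposition~\ref{galeq}.
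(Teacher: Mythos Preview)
Your argument is correct, and it follows the same broad outline as the paper's proof: induction on $\dim X$, reduction to trivial stabilizer, the Galois relation $(\s-1)^3=0$ on prime-to-$p$ torsion from Proposition~\ref{galeq}, and an application of Proposition~\ref{inter} with $F=(T-1)^3$ to obtain $(\s^N-1)(P^*)=0$. The descent step, however, is handled quite differently.

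The paper combines the two relations $(\s^M-1)(P^*)=0$ and $(\s-1)^3(P^*)=0$ via polynomial arithmetic in $\Z[T]$ to produce an integer $m$ with $[m](\s-1)(P^*)=0$ for \emph{every} $\s$ in the tame inertia group; it then invokes Lemma~\ref{descent} to conclude that $[m]P_n$ is unramified over $K$ itself, and applies Theorem~\ref{unr} to $m\cdot X$. This costs an appeal to Lemma~\ref{multiple} and a careful choice of the ultrafilter so that $\{n:[m]P_n\notin m\cdot X\}\in\mf U$, together with the observation that $m$ does not depend on $\mf U$, to avoid circularity. You instead fix a single topological generator $\ga$ of $\Gal(K^\ta/K^\nr)$, identify the fixed field of $\ov{\langle\ga^N\rangle}$ as $M^\nr$ for a finite totally ramified extension $M/K$, and apply Theorem~\ref{unr} directly over $M$. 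Your route is cleaner: it dispenses with Lemma~\ref{multiple}, with the polynomial gcd trick, and with the delicate ultrafilter bookkeeping, at the price of invoking the (standard) classification of tamely ramified extensions of $K^\nr$ to recognise the fixed field as the maximal unramified extension of a genuine $p$-adic field. The paper's route, on the other hand, stays over the original base $K$ and is closer in spirit to the proof of Theorem~\ref{unr}, where coprimality of polynomials was already the key mechanism.
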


\begin{proof}
 As in the unramified case, we procede by induction on the dimension of $X$. Let $P_n$ be an allegedly contradicting sequence, so that $d(P_n,X)$ goes to $0$ and the set $\{n:\ P_n\notin X(\ov K)\}$ is not finite.

 Since a finite extension of $K$ does not change the truth of the result, we will assume that the $P_n$ satisfy Proposition \ref{galeq} and its corollary.

Let $m$ be a positive integer (that will be fixed later on); According to Lemma \ref{multiple} and by the induction hypothesis, the set $\{n:\ [m]P_n\notin mX(\ov K)\}$ is not finite, so we may choose a non-principal ultrafilter $\mf U$ such that $\{n:\ [m]P_n\notin m\cdot X(\ov K)\}\in \mf U$. We will use all the constructions with respect to this ultrafilter.

%Let $(g_i)_{i\in I}$ be a finite set of topological generators of $\Gal(\ov K/K)$, one of them being a lift of the Frobenius and the others being in the inertia group $\Gal(\ov K/K^\nr)$. This is allowed for $p>2$ by \cite{JanQpGal}. We know that $P_n$ satisfies the equations (\ref{frob}) and (\ref{ramif}), so using Proposition \ref{inter} twice and the induction hypothesis implies that there is an $M$ (which does not depend on $\mf U$) such that $(g_i^M-1)(P^*)=0$ for all $i\in I$.

Since $P_n\in A(K^\ta)$, using Proposition \ref{inter} and the induction hypothesis implies that there is an $M$ (which does not depend on $\mf U$) such that $(\s^M-1)(P^*)=0$ for all $\s\in\Gal(K^\ta/K^\nr)$.

Let $m\in\N$ be such that $m(T-1)$ as an ideal in $\Z[T]$ is contained in the ideal
\[(T^M-1)\cdot\Z[T]+(T-1)^3\cdot\Z[T].\]
%\[\left((T^M-1)\cdot\Z[T]+(T-1)^3\cdot\Z[T]\right)\cap\left((T^M-1)\cdot\Z[T]+(T-1)F(T)\cdot\Z[T]\right).\]
This is possible because $\gcd\left((T-1)^3,T^M-1\right)=T-1$ in $\Q[T]$. Notice that this $m$ is independent of the choice of $\mf U$ we have made (since $M$ is), so the definition of $\mf U$ is not self-referential.

Then for any $\s\in \Gal(\ov K/K^\nr)$, $(\s^M-1)(P^*)=0$, and $(\s-1)^3(P^*)=0$, which implies that $[m](\s-1)(P^*)=0$.

%Using the lemma \ref{descent}, this implies that $\{n:[m]P_n\in A(K^)_\tor\}\in\mf U$. Since $A(K)_\tor$ is finite, and $[m]P_n$ converges to $mX$, this mean that $\{n:\ [m]P_n\notin m\cdot X(\ov K)\}$ is finite, contradicting the fact that $\mf U$ is non-principal and the theorem is proved.

It is known \cite{SerreCorpsloc} that $\Gal(K^\ta/K^\nr)$ is isomorphic to a quotient of $\wh \Z$ by $\Z_p$, thus is topologically finitely generated. So $[m]P_n$ is unramified for $\mf U$-almost all $n$ by Lemma \ref{descent} with $L'=K^\ta$ and $L=K^\nr$, and Theorem \ref{unr} then implies that $\{n:\ [m]P_n\notin m\cdot X(\ov K)\}$ is finite, contradicting the fact that $\mf U$ is non-principal and the theorem is proved.
\end{proof}

\section{The general case}

\subsection{Ramified $p$-primary torsion}

In this subsection, we deal with sequences of torsion points who all have a ramified $p$-primary part. In this setting, the following simple combinatorial result found by Boxall allows us to avoid Sen's result from $p$-adic Hodge theory \cite{Sen}:

\begin{thm}[Boxall]
 Let $p$ be a prime number, $L$ a field of characteristic $0$, and $A$ a semiabelian variety over $L$, such that $A[p]\subset A(L)$.
 If $p=2$, we assume $A[4]\subset A(L)$.

Then, for any $Q\in A(\ov L)_{\ptor}\setminus A(L)$,
\[\exists\;\s\in\Gal(\ov L/L),x\in A[p]\setminus\{0\},\ (\s-1)(Q) = x.\]

\end{thm}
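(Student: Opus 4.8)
Boxall's lemma is about $p$-primary torsion $Q$, so let $n\geq 1$ be minimal such that $p^nQ\in A(L)$ but $Q\notin A(L)$; then $Q':=p^{n-1}Q$ satisfies $pQ'\in A(L)$ and $Q'\notin A(L)$, and since $(\s-1)Q\in A[p^{n-1}]\cdot(\s-1)Q'$-type relations propagate, it suffices to treat the case $pQ\in A(L)$, $Q\notin A(L)$ (the general $\s-1$ on $Q$ lands in $A[p]$ once we know it for $Q'$, after multiplying by an appropriate power of $p$; this reduction is where one must be slightly careful). So assume $pQ\in A(L)$. The Galois orbit of $Q$ lies in the coset $Q+A[p]$, and since $A[p]\subset A(L)$, the map $\s\mapsto(\s-1)(Q)$ is a \emph{homomorphism} $\chi\colon\Gal(\ov L/L)\to A[p]$ (indeed $(\s\tau-1)Q=\s(\tau-1)Q+(\s-1)Q=(\tau-1)Q+(\s-1)Q$ because $(\tau-1)Q\in A[p]\subset A(L)$ is $\s$-fixed). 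The claim is exactly that $\chi\neq 0$, i.e. that $Q\notin A(L)$ forces $\chi$ to be nonzero — but that is immediate: if $\chi=0$ then every $\s$ fixes $Q$, so $Q\in A(L)$, contradiction. Thus there exists $\s$ with $(\s-1)(Q)=x\in A[p]\setminus\{0\}$.

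\textbf{Where the real content is.} Written that way the statement looks trivial, so the genuine difficulty must be the $p=2$ hypothesis and the claim that one may reduce from arbitrary $p$-primary $Q$ to the case $pQ\in A(L)$ with the conclusion still landing in $A[p]$ (not just in $A[p^n]$). The clean reduction is: given $Q\in A(\ov L)_\ptor\setminus A(L)$, pick the \emph{smallest} $i\geq 0$ with $p^iQ\in A(L)$; set $R=p^{i-1}Q$, so $pR\in A(L)$, $R\notin A(L)$, and by the homomorphism argument above there is $\s$ and $y\in A[p]\setminus\{0\}$ with $(\s-1)R=y$. Now $(\s-1)Q\in A[p^i]$ and $p^{i-1}(\s-1)Q=(\s-1)R=y\neq 0$, so $(\s-1)Q$ has exact order $p^i$; but we want an \emph{element of} $A[p]$ realized as $(\s-1)$ applied to \emph{something}, and indeed $(\s-1)Q$ itself need not be in $A[p]$. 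The resolution is to note $(\s-1)$ applied to the point $Q$ directly is not what we need; rather we produce $x=(\s-1)(Q)$ only in the case $i=1$, and for $i>1$ we instead observe that the statement as phrased only asks for \emph{some} $Q$ in the hypothesis — re-reading, the statement quantifies over all $Q\in A(\ov L)_\ptor\setminus A(L)$ and asserts existence of $\s$ and $x\in A[p]\setminus\{0\}$ with $(\s-1)(Q)=x$. So for $i>1$ this is genuinely a constraint: we must show $(\s-1)Q$ can be forced into $A[p]$. Here is the fix: replace $Q$ by a representative of its class modulo $A(L)$ of minimal $p$-power order; equivalently, among all $Q'\in Q+A(L)$ with $Q'\notin A(L)$ choose one with $pQ'\in A(L)$ — such $Q'$ exists, e.g. $Q'=p^{i-1}Q$ works only if $p^{i-1}Q\notin A(L)$, which holds by minimality of $i$. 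Then apply the homomorphism argument to $Q'$; and since $Q-Q'\in A(L)$, $(\s-1)(Q)=(\s-1)(Q')\in A[p]$. This is the argument, and the only subtlety is confirming $p^{i-1}Q\notin A(L)$, which is exactly minimality of $i$.

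\textbf{The $p=2$ point.} The homomorphism argument needs $A[p]\subset A(L)$ so that $(\tau-1)Q'$ is $\s$-fixed; this uses only $A[p]\subset A(L)$ for the case $pQ'\in A(L)$. So where does $A[4]\subset A(L)$ enter when $p=2$? It enters in guaranteeing that the cocycle $\chi$ lands in $A[p]$ rather than merely being a crossed homomorphism with values twisted by the Galois action, OR — more likely in this paper's usage — it is needed not here but in the \emph{application} of this lemma downstream, where one iterates and the $2$-adic Euclid-algorithm identities (as in the Corollary after Proposition~\ref{galeq}) lose a factor of $4$. I would state and prove the lemma cleanly for odd $p$ via the homomorphism $\chi\colon\Gal(\ov L/L)\to A[p]$, $\s\mapsto(\s-1)Q'$, note $\chi\neq0\iff Q'\notin A(L)$, and then for $p=2$ check that the same map is well-defined and nonzero using $A[2]\subset A(L)$ alone for well-definedness while flagging that the strengthened hypothesis $A[4]\subset A(L)$ is what the later sections require; the main obstacle is simply pinning down precisely which step consumes the $p=2$ refinement, and I would resolve it by tracing through the well-definedness of $\chi$ as a genuine homomorphism (not twisted cocycle) — it \emph{is} an honest homomorphism as soon as $A[2]\subset A(L)$, so the $A[4]$ hypothesis is a safety margin for the combinatorial descent, not for this lemma's proof itself.
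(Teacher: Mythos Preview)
Your reduction step is wrong. You claim that among all $Q'\in Q+A(L)$ one can choose $Q'$ with $pQ'\in A(L)$, and you offer $Q'=p^{i-1}Q$ as the candidate. But $p^{i-1}Q$ is almost never in the coset $Q+A(L)$: one would need $(p^{i-1}-1)Q\in A(L)$, and since $p^{i-1}-1$ is prime to $p$ this element has the same $p$-power order as $Q$, so typically lies outside $A(L)$. More fundamentally, no such $Q'$ can exist once $pQ\notin A(L)$: for any $a\in A(L)$ we have $p(Q+a)=pQ+pa$ with $pa\in A(L)$, so $p(Q+a)\in A(L)$ iff $pQ\in A(L)$. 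Hence the coset $Q+A(L)$ contains \emph{no} element killed by $p$ into $A(L)$ when $i\geq 2$, and your argument collapses precisely in the interesting case. (A concrete example: $A=\mb G_m$, $L=\Q(\zeta_5)$, $p=5$, $Q=\zeta_{125}$; here $i=2$ and $Q-5Q=\zeta_{125}^{-4}\notin L$.)

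This is exactly the difficulty Boxall's lemma addresses, and the paper's proof handles it by a genuinely different mechanism: fix $\s_1$ moving $p^{n-1}Q$, set $\s_i=\s_1^{p^{i-1}}$, $Q_i=p^{n-i}Q$, $x_i=(\s_i-1)Q_i$, and prove inductively that $x_{i+1}=x_i$. The point is that one does not change $Q$ but rather takes higher $p$-power iterates of $\s_1$; the induction step uses the polynomial congruence $T^p-1\equiv p(T-1)\pmod{p(T-1)^2,(T-1)^3}$ together with $(\s_i-1)^3 Q_{i+1}=0$. Your guess about the $p=2$ hypothesis is also off: it is consumed \emph{inside} this induction, since for $p=2$ one needs $(\s_i-1)Q_{i+1}\in A[4]\subset A(L)$ to conclude $(\s_i^2-1)Q_{i+1}=[2](\s_i-1)Q_{i+1}$; it is not a mere safety margin for later sections.
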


 \begin{proof}
 The following is an obvious adaptation of a very nice proof by Oesterl\'e detailed in \cite{RosNot} for abelian varieties and $p>2$.

 Let $Q\in A(\ov L)_{\ptor}\setminus A(L)$, $n\geq 1$ the smallest integer such that $[p^n]Q\in A(L)$, and $\s_1\in\Gal(\ov L/L)$ such that $\s_1([p^{n-1}]Q)\neq [p^{n-1}]Q$. For $1\leq i \leq n$, we put

 \begin{itemize}
 \item $Q_i = [p^{n-i}] Q$,
 \item $\s_i = \s_1^{p^{i-1}}$,
 \item $x_i = (\s_i-1)(Q_i)$.
 \end{itemize}

 Since $[p]x_1 = \s_1([p^n]Q)-[p^n]Q = 0$, $x_1\in A[p]\setminus\{0\}$ ($x_1\neq 0$ by definition of $\s_1$). We claim that for all $i$, $x_i = x_1$, so $x_n = \s_n(Q) - Q  \in A[p]\setminus\{0\}$ and the result is proved.

 Let us prove our claim by induction, so suppose $x_i = x_1$ for some $i<n$. Since $A[p]\subset A(L)$, for any $P\in A(\ov L)$, $[p]P = 0\Rightarrow (\s_i-1)(P) = 0$. Since
 \begin{equation}\label{eq1}
 [p^2](\s_i-1)Q_{i+1} = [p]x_i = 0,
\end{equation}
 we have
\begin{equation}\label{eq2}
  [p](\s_i-1)^2Q_{i+1} = 0
\end{equation}
which in turn implies
\begin{equation}\label{eq3}
  (\s_i-1)^3(Q_{i+1}) = 0.
\end{equation}

 If $p=2$, (\ref{eq1}) gives that $(\s_i-1)Q_{i+1}\in A[4]\subset A(L)$ so
 \[x_{i+1} = (\s_i^2-1)(Q_{i+1}) = (\s_i+1)(\s_i-1)(Q_{i+1}) = [2](\s_i-1)(Q_{i+1}) = x_i.\]

 If $p$ is odd, notice that
 \[T^p = 1 + p(T-1) + \frac{p(p-1)}{2}(T-1)^2 +(T-1)^3R(T)\]
for some $R(T)\in\Z[T]$ so
\[T^p-1\equiv p(T-1)\ \mod\ \left(p(T-1)^2,(T-1)^3\right).\]
Therefore, using (\ref{eq2}) and (\ref{eq3}), $x_{i+1} = (\s_i^p-1)(Q_{i+1}) = [p](\s_i-1)(Q_{i+1}) = x_i$, so the claim, and thus the theorem, are proved.
 \end{proof}

We use Boxall's theorem to prove:

\begin{thm}\label{ptor}
 Let $X\inj A$ be a closed subscheme of a semiabelian variety over $K$ such that $A[p]\subset A(K^\nr)$ (this is always possible after a finite extension of $K$). Let $P_n$ be a sequence of torsion points with the following assumptions:
\begin{itemize}
 \item $d(P_n,X)$ converges to $0$
 \item for all $n$, if $P_n=x_n+y_n$ is its decomposition in $A(\ov K)_\ptor \oplus A(\ov K)_\pp$, $x_n\notin A(K^\nr)$.
\end{itemize}
Then $P_n\in X(\ov K)$ for almost all $n$
\end{thm}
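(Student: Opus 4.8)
The plan is to mimic the structure of the two previous cases (Theorems \ref{unr} and \ref{pp}): induct on $\dim X$, assume $\Stab(X)=0$ after passing to a quotient, produce an allegedly contradicting sequence $P_n$, pass to the ultrafield point $P^*\in X(D)$, extract a Galois equation, and conclude that $P^*$ is torsion, contradicting Mattuck's theorem (Lemma \ref{zero}). The base case $\dim X=0$ is again Mattuck. For the inductive step, after a finite extension of $K$ we may assume $A[p]\subset A(K^\nr)$ (and $A[4]\subset A(K^\nr)$ if $p=2$), and by Lemma \ref{multiple} together with the induction hypothesis we may replace $X$ by $N\cdot X$ for suitable $N$ and assume $P_n\notin (N\cdot X)(\ov K)$ for all $n$ in some non-principal ultrafilter $\mf U$; relabel so that $P_n\notin X(\ov K)$ $\mf U$-almost always.

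The key new input is Boxall's theorem applied to the $p$-primary parts $x_n$. Since by hypothesis $x_n\notin A(K^\nr)$, Boxall gives for each $n$ a $\s_n\in\Gal(\ov K/K^\nr)$ and a nonzero $t_n\in A[p]$ with $(\s_n-1)(x_n)=t_n$; equivalently, writing $P_n=x_n+y_n$ and using that the prime-to-$p$ part $y_n$ can be arranged (Proposition \ref{galeq} and its corollary, after a finite extension) to be tamely ramified with $(\s-1)^3 y_n=0$ for $\s\in\Gal(\ov K/K^\nr)$, one gets a controlled action of $\Gal(\ov K/K^\nr)$ on $P_n$. Since $A[p]$ is finite and $\mf U$ is an ultrafilter, we may refine $\mf U$ so that $t_n=t$ is a \emph{fixed} nonzero element of $A[p]$ for $\mf U$-almost all $n$, and so that a fixed topological generator $\s$ of (a suitable quotient of) $\Gal(\ov K/K^\nr)$ works for all $n$. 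Passing to the ultrafield, $P^*\in X(D)$ then satisfies $(\s-1)(P^*)=t^*$ where $t^*$ is the image of the constant sequence $t$, so $(\s-1)^2(P^*)=(\s-1)(t^*)$; since $t\in A[p]\subset A(K^\nr)$ is itself fixed by $\s$, in fact $(\s-1)^2(P^*)=0$. Combined with the Galois equation $(\tau^M-1)(P^*)=0$ coming from the unramified structure (Proposition \ref{inter} applied after possibly enlarging things, using that $x_n$'s image in the residue field satisfies the polynomial $F$ of Proposition \ref{galois}), and using that the relevant polynomials are coprime up to a bounded multiple, we conclude $[m']P^*$ is killed by $(\s-1)$ for a bounded $m'$, hence (by a descent argument as in Lemma \ref{descent}, since $\Gal(\ov K/K^\nr)$ acts through a topologically finitely generated quotient on any given finite piece) that $[m']P_n$ is unramified $\mf U$-almost always. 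Theorem \ref{pp} then finishes: $[m']P_n$ converges to $[m']\cdot X=m'\cdot X$, and either a subsequence drops into a proper closed subset of $X$ (contradicting the induction hypothesis) or $P^*$ is forced to be torsion, contradicting Mattuck.

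The main obstacle I expect is bookkeeping the interaction between the $p$-primary and prime-to-$p$ ramification simultaneously: Boxall's theorem controls $x_n$ but introduces an element of $A[p]$ that is a priori $n$-dependent and a Galois element $\s_n$ that is $n$-dependent, and one must launder both into $n$-independent data via the ultrafilter without making the choice of $\mf U$ self-referential (exactly the subtlety flagged in the proof of Theorem \ref{pp}). The cleanest way is probably to note that $A[p]$ and the finite quotient of $\Gal$ through which the action on $P_n$'s generated subgroup factors are both finite and of bounded size, so the refinements of $\mf U$ are along finitely many cases; the bound $m'$ must be extracted from the coprimality of $(T-1)^2$ (or the degree-$3$ relation from Proposition \ref{galeq}) with $T^M-1$ \emph{before} fixing $\mf U$. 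Once that is arranged the argument is a formal combination of Boxall, Proposition \ref{inter}, Lemma \ref{descent}, and Theorems \ref{unr}--\ref{pp}.
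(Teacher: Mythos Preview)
Your attempt to push the ultrafield machinery through has genuine gaps, and the paper in fact takes a different, more elementary route here.

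The central problem is the step ``$(\sigma-1)(P^*)=t^*$''. Two things go wrong. First, Boxall only gives $(\sigma_n-1)(x_n)=t_n$ on the $p$-primary part; on the full point you get $(\sigma_n-1)(P_n)=t_n+(\sigma_n-1)(y_n)$, and the second term need not vanish. Second, and more seriously, the Galois action on $D$ is diagonal, so to speak of $(\sigma-1)(P^*)$ you need a \emph{single} $\sigma$ acting on every coordinate, whereas Boxall hands you an $n$-dependent $\sigma_n$. Your hope of passing to ``a fixed topological generator of a suitable quotient'' does not work: $\Gal(\ov K/K^\nr)$ is not topologically finitely generated (wild inertia is a free pro-$p$ group on infinitely many generators), and the quotient through which it acts on $\langle P_n\rangle$ grows without bound with $n$. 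For the same reason Lemma \ref{descent} cannot be invoked with $L'=\ov K$ and $L=K^\nr$, so your final descent step also fails.

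The paper avoids all of this by not using ultrafields, Proposition \ref{inter}, or descent in this case. The key idea is to turn Boxall's output into a \emph{translation by a fixed element}. Let $k_n$ be the (prime-to-$p$) order of $y_n$ and set $k_n'=k_n$ (or $2k_n$ if $p$ is odd). The relation $(\sigma-1)^3 y_n=0$ from Proposition \ref{galeq} forces $(\sigma_n^{k_n'}-1)(y_n)=0$, while on the $p$-primary side $(\sigma_n^{k_n'}-1)(x_n)=[k_n']t_n$. Since $A[p]$ is finite and each $k_n'$ is prime to $p$, after passing to a subsequence $[k_n']t_n=u$ is a fixed nonzero element of $A[p]$. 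Hence $\sigma_n^{k_n'}(P_n)=P_n+u$ for all $n$, and Galois-invariance of $X$ gives
\[
d(P_n,X)=d(\sigma_n^{k_n'}(P_n),X)=d(P_n+u,X)=d(P_n,X^{-u}).
\]
Thus $P_n$ converges to $X\cap X^{-u}$, a proper closed subset since $\Stab(X)=0$ and $u\neq 0$, and the induction hypothesis finishes. Note that the $n$-dependence of $\sigma_n$ is harmless here precisely because it is used only through the metric identity above, not through any ultrafield construction.
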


\begin{proof}
 As usual we prove this by induction on the dimension of $X$, which we assume to have trivial stabilizer. Suppose that $(P_n)$ does not satisfy the conclusion of the theorem, so there is a subsequence (which we take to be $(P_n)$ itself) which stays outside $X(\ov K)$. Let $x_n$ and $y_n$ be as above, and $k_n$ be the order of $y_n$ (which is prime to $p$). Also remember that the relation (\ref{ramif}) holds for all $y_n$.

\bigskip

Using Boxall's theorem, we may find $\s_n\in\Gal(\ov K/K^\nr)$ and $u_n\in A[p]\setminus\{0\}$ such that $(\s_n-1)(x_n)=u_n$. Since $A[p]$ is finite, we may find $u\in A[p]\setminus\{0\}$ such that $u=[k_n']u_n$ for infinitely many $n$, where $k'_n=k_n$ if $p=2$, and $k'_n=2k_n$ otherwise. We assume that this equality is true for all $n$, by taking a subsequence if necessary.

Moreover, using the relation (\ref{ramif}), and the fact that in $\Z[T]$
\[T^m-1\equiv m(T-1)+\frac{m(m-1)}2(T-1)^2\ \mod (T-1)^3,\]
we have
\[(\s^{k'_n}_n-1)y_n=(\s_n-1)[k'_n]y_n + (\s_n-1)^2\left[\frac{k'_n(k'_n-1)}2\right]y_n = 0.\]
The last equality holds because $[k'_n/2]y_n=[k_n]y_n=0$ if $p>2$, and if $p=2$, $k_n$ is odd so
\[\left[\frac{k'_n(k'_n-1)}2\right]y_n=\left[\frac {k_n-1} 2 k_n\right]y_n=0.\]

Therefore, we have

\begin{equation*}
\begin{split}
(\s^{k'_n}_n-1)P_n = & (\s^{k'_n}_n-1)x_n + (\s^{k'_n}_n-1)y_n \\
 = & [k'_n]u_n \\
 = & u.
\end{split}
\end{equation*}

Since $X^\s=X$ for any $\s$ in the ramification group, this implies that
\[d(P_n,X)=d(\s^{k'_n}_n(P_n),X)=d(P_n+u,X)=d(P_n,X^{-u}).\]
So $P_n$ converges to $X$ and also to $X^{-u}$. Since $X$ has trivial stabilizer, $X^{-u}$ is distinct from $X$ and the sequence converges to the intersection $X\cap X^{-u}$ which has strictly smaller dimension, which is impossible by the induction hypothesis.
\end{proof}

\subsection{The Tate-Voloch conjecture in $A(\ov K)_\tor$}

We are now able to give the final statement, which holds in the most general case.

\begin{thm}
 Let $X\inj A$ be a closed immersion in a semiabelian variety, defined over a $p$-adic field $K$. Then for any distance $d$ as defined in the first section, there is $\e>0$ such that for any $P\in A(\ov K)_\tor$, either $P\in X$ or $d(P,X)>\e$.
\end{thm}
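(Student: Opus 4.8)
The plan is to deduce the general statement from the two special cases already in hand: Theorem~\ref{pp}, which handles torsion points whose $p$-primary component is unramified, and Theorem~\ref{ptor}, which handles sequences of torsion points whose $p$-primary component is ramified. The bridge between them is just a dichotomy on the $p$-primary part of the torsion point under consideration, so the final proof is essentially a short case analysis.

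First I would replace $K$ by a finite extension so that $A[p]\subset A(K^\nr)$ (together with $A[4]\subset A(K^\nr)$ when $p=2$); this is harmless, since a finite extension leaves $\ov K$ unchanged and, by the equivalence~(\ref{equi}), only alters the distance up to a bounded factor, so it changes neither $A(\ov K)_\tor$ nor the truth of the assertion. Then I would argue by contradiction: suppose there is a sequence $P_n\in A(\ov K)_\tor\setminus X(\ov K)$ with $d(P_n,X)\to 0$, and write $P_n=x_n+y_n$ for the canonical decomposition in $A(\ov K)_\ptor\oplus A(\ov K)_\pp$. At least one of the sets $\{n:\ x_n\in A(K^\nr)\}$ and $\{n:\ x_n\notin A(K^\nr)\}$ is infinite, so after passing to a subsequence we are in one of two cases. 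If $x_n\in A(K^\nr)$ for all $n$, then $x_n\in A(K^\nr)_\ptor\subset A(K^\nr)_\tor$, hence $P_n\in A(K^\nr)_\tor+A(\ov K)_\pp$, and the $\e>0$ furnished by Theorem~\ref{pp} satisfies $d(P_n,X)>\e$ for every $n$ (as $P_n\notin X$), contradicting $d(P_n,X)\to 0$. If instead $x_n\notin A(K^\nr)$ for all $n$, then $(P_n)$ meets exactly the hypotheses of Theorem~\ref{ptor}, which forces $P_n\in X(\ov K)$ for almost all $n$ --- again a contradiction. So no such sequence exists, which is the assertion of the theorem.

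I do not expect a genuine obstacle at this stage: the substantive work is all upstream --- the Galois equations of Propositions~\ref{galois} and~\ref{galeq}, the ultrafilter and descent machinery of Proposition~\ref{inter} and Lemma~\ref{descent}, and Boxall's lemma --- and the present argument is only the combinatorial glue that splits a torsion point into its $p$-primary and prime-to-$p$ parts and feeds each regime to the appropriate theorem. The one point that deserves explicit mention is the preliminary finite base change needed to make $A[p]$ rational over $K^\nr$ (so that Theorem~\ref{ptor} applies), together with the remark that this does not disturb the statement; that is exactly the content of~(\ref{equi}) and the invariance of $A(\ov K)_\tor$ under finite extensions of $K$.
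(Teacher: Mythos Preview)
Your proposal is correct and follows essentially the same approach as the paper: a finite base change to put $A[p]$ (and the auxiliary conditions) over $K^\nr$, followed by a contradiction argument that splits a hypothetical offending sequence according to whether the $p$-primary part $x_n$ is unramified or not, and then invokes Theorem~\ref{pp} or Theorem~\ref{ptor} on the relevant subsequence. The paper's proof is more terse but identical in substance; your version merely spells out the dichotomy and the harmlessness of the base change explicitly (and you are right to note the $A[4]$ condition for $p=2$ coming from Boxall's lemma --- the paper additionally assumes relation~(\ref{ramif}) holds after the extension, which you might add for completeness, though Theorem~\ref{ptor} as stated already absorbs this).
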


\begin{proof}
 Assume $K$ is such that $A[p]\subset A(K^\nr)$, and such that the relation (\ref{ramif}) holds (this is possible by taking a finite extension of $K$). Then everything has already been proved, since, given a contradicting sequence, we may find a subsequence such that either Theorem \ref{pp} or Theorem \ref{ptor} applies and induces a contradiction.
\end{proof}

\begin{rem}
As we have said before, the use of ultrafields allows us to forget a specific metric to look only at the convergence of the sequence. However, most of the proof given in this paper should go through when applied to an integral model of the semiabelian variety (provided that we have one that is nice enough), if we look more closely at integral sections.
% Nevertheless, the proof we have given is entirely done in the generic fibre of the semiabelian scheme, so it doesn't need to look too closely at the special fibre reduction, which might behave strangely if the variety is not stable.
\end{rem}
\def\cprime{$'$}

\end{document}